\documentclass{amsart}

\usepackage[utf8]{inputenc}
\usepackage{comment}
\usepackage{amsmath}
\usepackage{amssymb}
\usepackage{amsthm}
\usepackage[all]{xy}

\usepackage{tikz}
\usetikzlibrary{cd,decorations.pathreplacing,decorations.markings}
\tikzset{mid arrow/.style={postaction={decorate,decoration={
          markings,
          mark=at position .5 with {\arrow[#1]{stealth}}
      }}}}

\swapnumbers
\newtheorem{theorem}{Theorem}[section]
\newtheorem*{utheorem}{Theorem}
\newtheorem*{ucorollary}{Corollary}
\newtheorem{definition}[theorem]{Definition}
\newtheorem{proposition}[theorem]{Proposition}
\newtheorem{corollary}[theorem]{Corollary}
\newtheorem{lemma}[theorem]{Lemma}
\theoremstyle{definition}
\newtheorem*{uremark}{Remark}
\newtheorem{remark}[theorem]{Remark}
\newtheorem{example}[theorem]{Example}

\newcommand*\cocolon{%
        \nobreak
        \mskip6mu plus1mu
        \mathpunct{}%
        \nonscript
        \mkern-\thinmuskip
        {:}%
        \mskip2mu
        \relax
}

\usepackage{pict2e,picture}
\makeatletter
\DeclareRobustCommand{\bbDelta}{{\mathpalette\bb@Delta\relax}}
\newcommand{\bb@Delta}[2]{%
  \begingroup
  \sbox\z@{$\m@th#1\Delta$}%
  \dimendef\Dht=6 \dimendef\Dwd=8
  \setlength{\Dwd}{\wd\z@}%
  \setlength{\Dht}{\ht\z@}%
  \begin{picture}(\Dwd,\Dht)
  \put(0,0){$\m@th#1\Delta$}
  \put(.52\Dwd,.77\Dht){\line(-13,-26){.35\Dht}}
  \end{picture}%
  \endgroup
}
\makeatother

\newcommand{\cC}{\mathcal{C}}
\newcommand{\C}{\mathrm{C}}
\newcommand{\J}{\mathrm{J}}
\newcommand{\Cb}{\mathrm{Q}}

\newcommand{\fC}{\mathfrak{C}}
\newcommand{\Set}{\mathrm{Set}}
\newcommand{\sSet}{\mathrm{sSet}}
\newcommand{\An}{\mathrm{An}}
\newcommand{\ssSet}{\mathrm{ssSet}}
\newcommand{\di}{\mathrm{dir}}

\newcommand{\Nc}{\mathfrak{N}}
\newcommand{\Sg}{\mathrm{W}}
\newcommand{\QL}{\mathrm{Q}^{\mathrm L}}
\newcommand{\QR}{\mathrm{Q}^{\mathrm R}}

\DeclareMathOperator{\dec}{dec}
\DeclareMathOperator{\Sing}{Sing}
\DeclareMathOperator{\Hom}{Hom}
\newcommand{\HomL}{\Hom^{\mathrm L}}
\newcommand{\HomR}{\Hom^{\mathrm R}}

\DeclareMathOperator{\Cat}{Cat}
\DeclareMathOperator{\CatKan}{\Cat^{\mathrm{Kan}}}
\DeclareMathOperator{\CatPan}{\Cat^{\pi\An}}
\DeclareMathOperator{\Cut}{Cut}
\DeclareMathOperator{\Fun}{Fun}

\DeclareMathOperator{\Coeq}{Coeq}

\newcommand{\tworightarrows}{\rightrightarrows} 

\title{Mapping spaces in homotopy coherent nerves}

\author[F.~Hebestreit]{Fabian Hebestreit}
\address{Mathematisches Institut, RFWU Bonn, Germany}
\email{f.hebestreit@math.uni-bonn.de}

\author[A.~Krause]{Achim Krause}
\address{Mathematisches Institut, WWU M\"unster, Germany}
\email{krauseac@uni-muenster.de}

\setcounter{tocdepth}{1}

\begin{document}

\begin{abstract}
We give a direct proof that middle mapping spaces in coherent nerves of Kan enriched categories have the same homotopy type as the original mapping spaces.
\end{abstract}
\maketitle
\tableofcontents
\section{Introduction}

For a quasi-category $\cC$, there are three common definitions of morphism complexes, often termed left, right and middle. For morphisms from $x$ to $y$ they can be defined as the fibres of the canonical maps
\[\cC_{/y} \rightarrow \cC, \quad \cC_{x/} \rightarrow \cC \quad \text{and} \quad \mathrm{Ar}(\cC) \rightarrow \cC \times \cC\]
over $x, y$ and $(x,y)$, respectively, and we shall denote them by $\HomL_\cC$, $\HomR_\cC$ and $\Hom_\cC$.

There are canonical inclusions
\[\HomL_\cC(x,y) \longrightarrow \Hom_\cC(x,y) \longleftarrow \HomR_\cC(x,y)\]
that are homotopy equivalences of Kan complexes (though not usually isomorphisms unless $\cC$ is an ordinary category) by non-trivial results of Joyal. Though all of these have their uses, arguably it is the middle one which is the most natural: For example, its definition is symmetric in source and target, it readily compares to morphism spaces of complete Segal spaces, has the most clearly defined composition law and features in the standard proof that fully faithful and essentially surjective functors are equivalences.

Now, one of the most prominent constructions of quasi-categories is the coherent nerve $\Nc$ of Cordier and Porter, which takes as input a category enriched in Kan complexes. There arises the problem of relating, for such a category $\C$ the given morphism complexes $\C(x,y)$ with those of $\Nc(\C)$. This was done by Lurie by producing an explicit equivalence $\C(x,y) \rightarrow \HomL_{\Nc(\C)}(x,y)$, which can be combined with Joyal's result above to obtain an equivalence $\sigma \colon \C(x,y) \rightarrow \Hom_{\Nc(\C)}(x,y)$. The main goal of the present note is to present a direct proof of this latter result avoiding the rather less elementary theorem of Joyal.

\begin{utheorem}
The map 
\[
\sigma \colon \C(x,y) \longrightarrow \Hom_{\Nc(\C)}(x,y)
\]
is a homotopy equivalence of Kan complexes for every Kan enriched category $\C$ with objects $x$ and $y$.
\end{utheorem}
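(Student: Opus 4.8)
The plan is to reformulate the middle mapping space of $\Nc(\C)$ through a cosimplicial simplicial set and to recognise the latter as a cosimplicial resolution of the point. Unwinding definitions, an $n$-simplex of $\Hom_{\Nc(\C)}(x,y)\subseteq\mathrm{Ar}(\Nc(\C))$ is a map $\Delta^n\times\Delta^1\to\Nc(\C)$ whose restrictions to $\Delta^n\times\{0\}$ and $\Delta^n\times\{1\}$ are the constant simplices on $x$ and $y$. Since $\fC$ is a left adjoint and hence preserves colimits, this datum is the same as a map of simplicial sets $\Cb_n\to\C(x,y)$, where $\Cb_n:=\fC(P_n)(x,y)$ is the mapping object in $\fC$ between the two vertices of the collapsed prism
\[
P_n:=(\Delta^n\times\Delta^1)\,\big/\,\bigl((\Delta^n\times\{0\})\sqcup(\Delta^n\times\{1\})\longrightarrow\ast\sqcup\ast\bigr).
\]
As $n$ varies the $\Cb_n$ assemble into a cosimplicial simplicial set $\Cb_\bullet$, and $\sigma$ is, by construction, the map induced on $[n]\mapsto\Hom_{\sSet}(-,\C(x,y))$ by a natural map of cosimplicial simplicial sets $p\colon\Cb_\bullet\to\Delta^\bullet$.

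The strategy is then to show that $p$ is a levelwise weak homotopy equivalence between Reedy cofibrant cosimplicial simplicial sets, i.e.\ that $\Cb_\bullet$ and $\Delta^\bullet$ are both cosimplicial resolutions of $\Delta^0$. Granting this, the standard homotopy theory of cosimplicial resolutions shows that, for any Kan complex $K$, the contravariant functor $X_\bullet\mapsto\bigl([n]\mapsto\Hom_{\sSet}(X_n,K)\bigr)$ takes $\Cb_\bullet$ to a Kan complex and $p$ to a weak equivalence; with $K=\C(x,y)$ this says exactly that $\sigma$ is a weak equivalence of Kan complexes, hence a homotopy equivalence --- and this route needs neither Joyal's theorem nor Lurie's comparison map. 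It thus remains to verify: (a) $\Cb_\bullet$ is Reedy cofibrant, i.e.\ each latching map $L_n\Cb_\bullet\to\Cb_n$ is a monomorphism; and (b) each $\Cb_n$ is weakly contractible --- for then $\Cb_n\to\Delta^n$ is automatically a weak equivalence (both targets being contractible), while the Reedy cofibrancy of $\Delta^\bullet$ is the classical statement that $\partial\Delta^n\hookrightarrow\Delta^n$.

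The combinatorial heart is (b). The collapsed prism $P_n$ is the union of the images of the $n+1$ non-degenerate top-dimensional simplices of $\Delta^n\times\Delta^1$, the $k$-th being a copy of $\Delta^{n+1}$ mapped in with its initial face $\Delta^{\{0,\dots,k\}}$ collapsed to $x$ and its final face $\Delta^{\{k+1,\dots,n+1\}}$ collapsed to $y$; correspondingly $\Cb_n$ is covered by subcomplexes $R_0,\dots,R_n$, where $R_k$ is the $\fC$-mapping object of such a doubly collapsed simplex. The aim is to show that each $R_k$ is a cube with certain faces collapsed, hence weakly contractible, that each contains the vertex given by the long diagonal $(0,0)\to(n,1)$, and that all their intersections are likewise contractible, so that an iterated Mayer--Vietoris (or nerve-theorem) argument yields $\Cb_n\simeq\ast$. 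Step (a) then runs on the same description: $L_n\Cb_\bullet$ is the union inside $\Cb_n$ of the images of the $n+1$ coface maps $\Cb_{n-1}\to\Cb_n$, and once these are recognised as subcomplexes intersecting correctly, the latching map is an inclusion. With (a) and (b) in hand, the deduction of the theorem from the theory of cosimplicial resolutions is formal.

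The step I expect to be the main obstacle is precisely (b): obtaining enough control of $\fC$ applied to a simplex with two opposite faces collapsed to establish the weak contractibility of $\Cb_n$, and in particular to pin down the subcomplexes $R_k$ together with their mutual intersections. A cleaner outcome, if attainable, would be to identify $\Cb_n$ outright as the nerve of a partially ordered set with a least element (namely the long diagonal), which would make (b) immediate and would also streamline the verification of (a).
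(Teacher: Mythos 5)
Your outer framework is the same as the paper's: your $P_n$ is the unreduced suspension $S\Delta^n$, your $\Cb_\bullet$ is the paper's cosimplicial object $\Sg$, the map $p$ is the paper's $\sigma\colon \Sg\to\Delta$, and the reduction of the theorem to ``Reedy cofibrant plus levelwise weakly contractible'' via Reedy's lemma (\ref{reedy}) is exactly the intended route. The problem is that what you present for the two points carrying all the content is a plan, not a proof, and you say so yourself: (b) is stated as an ``aim'' and flagged as the main obstacle, and (a) is deferred to the same unestablished description of the $R_k$ and their intersections. Concretely, three things are missing. First, the identification of $n$-simplices of $\Hom_{\Nc(\C)}(x,y)$ with maps $\fC(P_n)(0,1)\to\C(x,y)$ does not follow just from $\fC$ preserving colimits: you need that $\fC(P_n)$ has only the two objects, with trivial endomorphisms and no morphisms backwards, and that mapping spaces of such ``directed two-object'' categories are computed by the corresponding colimits of simplicial sets (the paper's \ref{cor:homspacepreservescolim} and \ref{lem:nerveadjointpreservestwoobject}). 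Second, you need the explicit computation of $\fC$ applied to $\Delta^k\ast\Delta^{n-k}$ with both join factors collapsed; the answer is an $n$-cube with certain faces collapsed (\ref{lem:homspacebisimplex}), and establishing this is a genuine pushout/adjunction argument, not a formality. Third, for the Mayer--Vietoris step you must show that the images of $R_k$ and $R_l$ in $\Cb_n$ intersect in exactly the image of the analogous piece for $\Delta^k\ast\Delta^{n-l}$ (and similarly for the latching map in (a)); this is precisely the combinatorial verification the paper isolates as unavoidable in \ref{Qcof}. Without these three items the argument does not close.

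For completing it, note that your cover is literally the paper's: $R_k$ is the piece the paper calls $\Cb_{k,n-k}$, and the paper organises all these pieces into a bicosimplicial simplicial set $\Cb$ together with a bisimplicial indexing object $\Cut^n$, so that $\Sg_n=|\Cut^n|_{\Cb}$. This turns your by-hand Mayer--Vietoris bookkeeping into two clean applications of Reedy's lemma: Reedy cofibrancy of $\Cb$ (\ref{Qcof}) handles all pairwise intersections at once, contractibility of each cube quotient (\ref{Qiscontractible}) handles your step (b) locally, and contractibility of $\Cut^n$ (\ref{cutcontr}, via $|\Cut^n|_J\cong\Delta^n\times\Delta^1$) replaces the nerve-theorem step. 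Your hoped-for ``cleaner outcome'' is only half available: each piece is a quotient of a poset nerve with a terminal vertex common to all of them, but $\Cb_n$ itself is not simply the nerve of a poset with an extremal element, which is why the paper routes the global contractibility through $\Cut^n$ rather than a single cone.
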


By formal considerations there is in fact a tautological isomorphism \[\Hom_{\Nc(\C)}(x,y) \cong \Sing_\Sg(\C(x,y))\] for some particular $\Sg \colon \bbDelta \rightarrow \sSet$; here and in general we denote for a functor $T \colon \mathrm A \rightarrow \mathrm B$ by 
\[|\cdot|_T\colon \mathcal P(\mathrm A) \longleftrightarrow \mathrm B \cocolon \Sing_T\]
the adjunction arising by colimit extension of $T$ (assuming of course that $A$ is small and $B$ cocomplete).

The map $\sigma$ of the theorem is then induced by an explicit transformation $\sigma \colon \Sg \rightarrow \Delta$, where $\Delta$ denotes the cosimplicial object of $\sSet$ formed by the simplices (which satisfies $|\cdot|_\Delta = \mathrm{id}_{\sSet} = \Sing_\Delta$). Our proof of the theorem boils down to the fact that the cosimplicial object $\Sg$ is both Reedy cofibrant and termwise contractible. This is the exact same strategy employed in the analogous result for left mapping spaces, except that the relevant cosimplicial object is a lot less complicated in that case. We, in fact, verify both cases simultaneously with comparatively little explicit combinatorial manipulation. 

We also provide a somewhat more detailed analysis of mapping spaces in coherent nerves, in particular producing (to us) surprising isomorphisms
\[\HomL_{\Nc(\C)}(x,y) \cong \HomR_{\Nc(\C)}(x,y)^\mathrm{op} \quad \text{and} \quad \Hom_{\Nc(\C)}(x,y) \cong \Hom_{\Nc(\C)}(x,y)^\mathrm{op}.\]

\begin{uremark}
Our hope is that the proof we present here can be used in a first course on quasi-categories, which in our experience is better served by considering middle mapping spaces, than either the left or right variants. It was precisely this context which originally provided the motivation for the present note.

Our proofs are therefore designed to work with minimal input (though we are not aware that they could be simplified much by investing results other than the comparison of left and middle mapping spaces). We will, however, either have to rely on the non-trivial fact that cofibrations of simplicial sets that are also weak homotopy equivalences are in fact anodyne extensions or on the consequence of Joyal's lifting theorem that middle mapping spaces in quasi-categories are Kan; this dependence could be avoided by directly checking that $|\cdot|_\Sg$ preserves anodyne extensions, but we refrain from doing so here, since we would expect at least one of these results to be available by the time the statements we prove here become relevant in any context.
\end{uremark}

\subsection*{Acknowledgements}
We want to heartily thank Thomas Nikolaus and Christoph Winges for several very useful discussions, and the participants of the lecture course `Introduction to higher categories' in the winter term 19/20 at the University of Bonn for enduring the first exposition of the material we present here.

FH is a member of the Hausdorff Center for Mathematics at the University of Bonn, and AK of the Cluster `Mathematics M\"unster: Dynamics-Geometry-Structure', both funded by the German Research Foundation (DFG) under grant nos. EXC 2047, project-ID 390685813 and EXC 2044, project-ID 390685587, respectively. AK was further funded by the DFG through the collaborative research centre `Geometry: Deformations an Rigidity' under grant no. SFB 1442, project-ID 427320536.

\section{Recollections}

Following directly from the definition of the three kinds of mapping spaces we can describe their simplices explicitly as follows: An $n$-simplex in $\Hom_{\cC}(x,y)$ consists of a map $\Delta^n\times \Delta^1 \to \cC$, which on $\Delta^n\times \{0\}$ is the constant map to $x$, and on $\Delta^n\times \{1\}$ the constant map to $y$. An $n$-simplex in the left mapping space $\HomL_{\cC}(x,y)$ consists of an $(n+1)$-simplex $\Delta^{n+1}\to \cC$ which sends the initial vertex $\{0\}$ to $x$, and the face $d_0\Delta^{n+1}$ opposite it via the constant map to $y$. Analogously, an $n$-simplex in $\HomR_{\cC}(x,y)$ consists of an $(n+1)$-simplex $\Delta^{n+1}\to \cC$ taking the terminal vertex $\{n+1\}$ to $y$ and the face $d_{n+1}\Delta^{n+1}$ opposite it to $x$ via the constant map. 

The comparison maps among the three types of mapping spaces are then induced by certain maps 
\[
\Delta^{n+1}/d_0\Delta^{n+1}\longleftarrow \Delta^n\times \Delta^1\amalg_{\Delta^n\times \{0,1\}} \{0,1\} \longrightarrow \Delta^{n+1}/d_{n+1}\Delta^{n+1}.
\]

The other ingredient into the statements, the coherent nerve $\Nc$, is defined as $\Sing_\fC \colon \Cat^\sSet \rightarrow \sSet$, via a certain cosimplicial simplicially enriched category $\fC \colon \bbDelta \rightarrow \Cat^\sSet$. $\fC(\Delta^n)$ has objects $\{0,\dots,n\}$ and morphisms from $i$ to $j$ are given as the nerve of the poset $\{T \subseteq [i,j] \mid i,j \in T\}$ ordered by inclusion, and composition is induced by union of subsets. Note that this nerve is isomorphic to $(\Delta^1)^{\times j-i-1}$. It is a theorem of Cordier and Porter \cite{CP} that $\Nc(\C)$ is a quasi-category if all mapping complexes in $\C$ are Kan. As usual we will also denote the colimit extension $|\cdot|_\fC \colon \sSet \rightarrow \Cat^\sSet$ simply by $\fC$. The mapping complexes in the categories $\fC(X)$ are most easily described by necklaces in $X$, as expounded in \cite{DS}. While one can also use this description to obtain a proof of our main result, this route is combinatorially involved and we will rather exploit a simplified description (valid only for very particular $X$), that we explain in the next section.\\

Now to start the analysis of $\Hom_{\Nc(\C)}(x,y)$ for Kan enriched $\C$, note that an $n$-simplex in it
corresponds to a map $\Delta^n\times \Delta^1 \amalg_{\Delta^n\times\{0,1\}} \{0,1\} \to \Nc(\C)$ which sends the two $0$-simplices to $x$ and $y$, respectively. Let us write $S\Delta^n$ for the domain of this map, since it is precisely the usual (unreduced) suspension; more generally we shall write $SK$ for $K \times \Delta^1\amalg_{K\times\{0,1\}} \{0,1\}$. By adjunction such a map is the same as a functor of simplicially enriched categories, $\fC[S\Delta^n]\to \C$ sending the two objects $0,1$ to $x,y$.

As we will soon see, in the source category, the endomorphisms of $0$ and $1$ are trivial (i.e. consist of just the identity regarded as a discrete simplicial set), and the simplicial set of maps $1\to 0$ is empty. The only interesting part of $\fC[S\Delta^n]$ is therefore the simplicial set $\big(\fC[S\Delta^n]\big)(0,1)$, which one readily checks to depend functorially on $n \in \Delta$.

\begin{definition}
Denote the cosimplicial simplicial set $n \mapsto \big(\fC[S\Delta^n]\big)(0,1)$ by $\Sg \colon \bbDelta \rightarrow \sSet$.\footnote{The abbreviation $\Sg$ is meant to indicate the word `Wurst', since in particular $\Sg_3$ looks like one.}
\end{definition}

\begin{figure}
\centering
\begin{tikzpicture}
\begin{scope}[shift={(0,0)}] 
  \path[draw, mid arrow] (0,0) -- (0,1);
  \path[draw, mid arrow] (0,0) -- (1,0);
  \path[draw, mid arrow] (0,0) -- (0,-1);
  \path[draw, mid arrow] (0,0) -- (-1,0);
  \path[draw, color=red, mid arrow] (0,1) -- (1,1);
  \path[draw, mid arrow] (1,0) -- (1,1);
  \path[draw, mid arrow] (1,0) -- (1,-1);
  \path[draw, mid arrow] (0,-1) -- (1,-1);
  \path[draw, mid arrow] (0,-1) -- (-1,-1);
  \path[draw, color=red, mid arrow] (-1,0) -- (-1,-1);

  \begin{scope}[shift={(-0.2,0.2)}] 
  \node at (-0.3,0.3) {$d_1$};
  \path[draw, mid arrow] (0,0) -- (0,1);
  \path[draw, mid arrow] (0,0) -- (-1,0);
  \end{scope}

  \begin{scope}[shift={(0.25,0)}] 
  \node at (1.3,0) {$d_2$};
  \path[draw, mid arrow] (1,0) -- (1,1);
  \path[draw, mid arrow] (1,0) -- (1,-1);
  \end{scope}

  \begin{scope}[shift={(0,-0.25)}] 
  \node at (0,-1.3) {$d_0$};
  \path[draw, mid arrow] (0,-1) -- (1,-1);
  \path[draw, mid arrow] (0,-1) -- (-1,-1);
  \end{scope}
\end{scope}

\begin{scope}[shift={(6.5,0)}] 

  \begin{scope}[shift={(0,-1.3)}] 
  \node at (1.3,-1.2) {$d_0$};
  \path[draw, dashed, mid arrow] (0,-1) -- (-1,-1);
  \path[draw, dashed, mid arrow] (0,-1) -- (1,-1);
  \path[draw, dashed, mid arrow] (0,-1) -- (-0.7,-1.5);
  \path[draw, dashed, mid arrow] (0,-1) -- (0.7,-0.5);
  \path[draw, dashed, color=red, mid arrow] (0.7,-0.5) -- (1.7,-0.5);
  \path[draw, dashed, color=red, mid arrow] (-1,-1) -- (-1.7,-1.5);
  \path[draw, mid arrow] (-0.7,-1.5) -- (-1.7,-1.5);
  \path[draw, mid arrow] (-0.7,-1.5) -- (0.3,-1.5);
  \path[draw, mid arrow] (1,-1) -- (0.3,-1.5);
  \path[draw, mid arrow] (1,-1) -- (1.7,-0.5);
  \end{scope}

  \begin{scope}[shift={(-2.5,0)}] 
  \node at (-0.3,0.3) {$d_1$};
  \path[draw, mid arrow] (0,0) -- (0,1);
  \path[draw, mid arrow] (0,0) -- (-1,0);
  \path[draw, mid arrow] (0,0) -- (1,0);
  \path[draw, mid arrow] (0,0) -- (-0.7,-0.5);
  \path[draw, color=red, mid arrow] (0,1) -- (1,1);
  \path[draw, mid arrow] (1,0) -- (1,1);
  \path[draw, mid arrow] (-0.7,-0.5) -- (-1.7,-0.5);
  \path[draw, color=red, mid arrow] (-1,0) -- (-1.7,-0.5);
  \path[draw, mid arrow] (-0.7,-0.5) -- (0.3,-0.5);
  \path[draw, mid arrow] (1,0) -- (0.3,-0.5);
  \end{scope}

  \begin{scope}[shift={(0,2.5)}] 
  \node at (-0.3,0.3) {$d_2$};
  \path[draw, mid arrow] (0,0) -- (0,1);
  \path[draw, mid arrow] (0,0) -- (-1,0);
  \path[draw, color=red, mid arrow] (0,1) -- (0.7,1.5);
  \path[draw, dashed, mid arrow] (0,0) -- (0.7,0.5);
  \path[draw, dashed, mid arrow] (0.7,0.5) -- (0.7,1.5);
  \path[draw, dashed, mid arrow] (0.7,0.5) -- (0.7,-0.5);
  \path[draw, dashed, mid arrow] (0,0) -- (0,-1);
  \path[draw, dashed, mid arrow] (0,-1) -- (0.7,-0.5);
  \path[draw, dashed, mid arrow] (0,-1) -- (-1,-1);
  \path[draw, color=red, dashed, mid arrow] (-1,0) -- (-1,-1);
  \end{scope}

  \begin{scope}[shift={(1.9,0)}] 
  \node at (2,0.25) {$d_3$};
  \path[draw, mid arrow] (1,0) -- (1,1);
  \path[draw, mid arrow] (1,0) -- (1,-1);
  \path[draw, mid arrow] (1,0) -- (0.3,-0.5);
  \path[draw, mid arrow] (1,0) -- (1.7,0.5);
  \path[draw, color=red, mid arrow] (1,1) -- (1.7,1.5);
  \path[draw, mid arrow] (1.7,0.5) -- (1.7,1.5);
  \path[draw, mid arrow] (1.7,0.5) -- (1.7,-0.5);
  \path[draw, mid arrow] (1,-1) -- (1.7,-0.5);
  \path[draw, mid arrow] (1,-1) -- (0.3,-1.5);
  \path[draw, color=red, mid arrow] (0.3,-0.5) -- (0.3,-1.5);
  \end{scope}

  \begin{scope}
    \foreach \y in {-0.5,-0.3,-0.1,0.1,0.3,0.5,0.7,0.9,1.1,1.3}
    {
      {
          \path[fill=red, opacity=0.3] (0.7,\y) rectangle +(1,0.1);
      }
    }
    \path[draw, dashed, mid arrow] (0.7,0.5) -- (0.7,1.5);
    \path[fill=red, opacity=0.3] (-1,0) -- (-1.7,-0.5) -- (-1.7,-1.5) -- (-1,-1) -- cycle;
    \path[fill=red, opacity=0.3] (0,1) -- (1,1) -- (1.7,1.5) -- (0.7,1.5) -- cycle;
    \path[draw, mid arrow] (0,0) -- (1,0);
    \path[draw, mid arrow] (1,0) -- (1,1);
    \path[draw, mid arrow] (1,0) -- (1,-1);
    \path[draw, mid arrow] (0,0) -- (-1,0);
    \path[draw, dashed, mid arrow] (-1,0) -- (-1,-1);
    \path[draw, mid arrow] (0,0) -- (0,1);
    \path[draw, dashed, mid arrow] (0,0) -- (0,-1);
    \path[draw, dashed, mid arrow] (0,-1) -- (1,-1);
    \path[draw, dashed, mid arrow] (0,-1) -- (-1,-1);
    \path[draw, mid arrow] (0,1) -- (1,1);
    \path[draw, mid arrow] (0,0) -- (-0.7,-0.5);
    \path[draw, mid arrow] (-1,0) -- (-1.7,-0.5);
    \path[draw, mid arrow] (1,0) -- (0.3,-0.5);
    \path[draw, dashed, mid arrow] (0,-1) -- (-0.7,-1.5);
    \path[draw, dashed, mid arrow] (-1,-1) -- (-1.7,-1.5);
    \path[draw, mid arrow] (1,-1) -- (0.3,-1.5);
    \path[draw, mid arrow] (1,0) -- (1.7,0.5);
    \path[draw, mid arrow] (1,1) -- (1.7,1.5);
    \path[draw, mid arrow] (1,-1) -- (1.7,-0.5);
    \path[draw, mid arrow] (1.7,0.5) -- (1.7,-0.5);
    \path[draw, mid arrow] (1.7,0.5) -- (1.7,1.5);
    \path[draw, mid arrow] (1.7,0.5) -- (1.7,1.5);
    \path[draw, mid arrow] (0,1) -- (0.7,1.5);
    \path[draw, mid arrow] (0.7,1.5) -- (1.7,1.5);
    \path[draw, dashed, mid arrow] (0,0) -- (0.7,0.5);
    \path[draw, dashed, mid arrow] (0.7,0.5) -- (1.7,0.5);
    \path[draw, dashed, mid arrow] (0.7,0.5) -- (0.7,-0.5);
    \path[draw, dashed, mid arrow] (0,-1) -- (0.7,-0.5);
    \path[draw, dashed, mid arrow] (0.7,-0.5) -- (1.7,-0.5);
    \foreach \x in {-1.7,-1.5,-1.3,-1.1,-0.9,-0.7,-0.5,-0.3,-0.1,0.1}
    {
      {
          \path[fill=red, opacity=0.3] (\x,-0.5) rectangle +(0.1,-1);
      }
    }
    \path[draw, mid arrow] (-0.7,-0.5) -- (-1.7,-0.5);
    \path[draw, mid arrow] (-0.7,-0.5) -- (0.3,-0.5);
    \path[draw, mid arrow] (-0.7,-0.5) -- (-0.7,-1.5);
    \path[draw, mid arrow] (-1.7,-0.5) -- (-1.7,-1.5);
    \path[draw, mid arrow] (0.3,-0.5) -- (0.3,-1.5);
    \path[draw, mid arrow] (-0.7,-1.5) -- (-1.7,-1.5);
    \path[draw, mid arrow] (-0.7,-1.5) -- (0.3,-1.5);
    \node[shift={(1.3,0.2)}] at (0.85,0.75) {$Q_{0,3}$};
    \node[shift={(1.3,0.2)}] at (0.85,-0.25) {$Q_{1,2}$};
    \node[shift={(-0.3,-1.1)}] at (0.15,-0.75) {$Q_{2,1}$};
    \node[shift={(-0.3,-1.1)}] at (-0.85,-0.75) {$Q_{3,0}$};
  \end{scope}
\end{scope}
\end{tikzpicture}
\caption{$W_2$ and $W_3$ together with their faces, red areas are collapsed to points. For the labeling of the cubes, see \ref{lem:homspacebisimplex} and the discussion immediately preceding it.}
\end{figure}
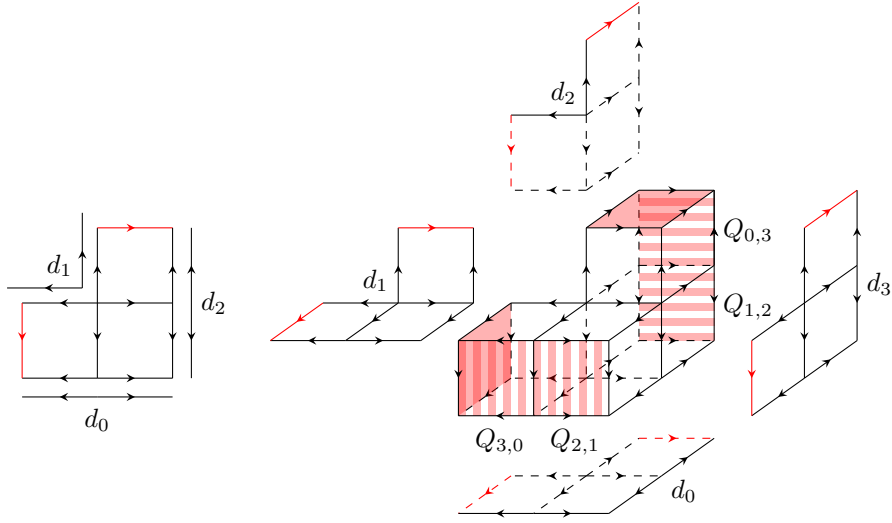

The data of a simplicially enriched functor $\fC[S\Delta^n] \rightarrow \C$ sending $0,1$ to $x,y$ is thus simply a map $\Sg_n\to \C(x,y)$. Since this correspondence is natural in $n$, we in total obtain an isomorphism
\[\Hom_{\Nc(\C)}(x,y) \cong \Sing_\Sg(\C(x,y))\]
as desired. Similar considerations yield cosimplicial objects $\QL, \QR \colon \bbDelta \rightarrow \sSet$ such that 
\[\HomL_{\Nc(\C)}(x,y) \cong \Sing_{\QL}(\C(x,y)) \quad \text{and} \quad \HomR_{\Nc(\C)}(x,y) \cong \Sing_{\QR}(\C(x,y)),\]
where $SK$ is replaced $S^L(K) = (\Delta^0 * K) \amalg_{K} \Delta^0$ and $S^R(K) = (K * \Delta^0) \amalg_{K} \Delta^0$, respectively.

The common feature of these simplicial sets, is that they are equipped with a canonical map to $\Delta^1$, whose fibres are trivial. It is this feature which allows for a fairly simple discussion of the cosimplicial objects $\Sg, \QL$ and $\QR$, as we will show in the next section.

\section{Directed two-object simplicial sets and categories}

In this section, we first explore the behavior of the adjoint pair $\fC$ and $N$ on the category of simplicially enriched categories with just two objects, and morphisms only ``in one direction'', as well as a counterpart in the world of simplicial sets. The upshot is that such simplicial sets $X$ have an associated bisimplicial set $\dec(X)$, and there is a certain bicosimplicial object $\Cb \colon \bbDelta \times \bbDelta \rightarrow \sSet$, such that 
\[\big(\fC(X)\big)(0,1) \cong |\dec(X)|_\Cb.\]
Since we will find $\Cb_{i,0} = \QL_i$ and $\Cb_{0,i} = \QR_i$ this observation offers no simplification concerning the analysis of left and right morphism complexes in coherent nerves, but with $X = S\Delta^n$ the right hand side allows for a simple analysis of $\Sg_n$, and thus middle morphism complexes, as well.

\begin{definition}
\label{def:dirsimplicial}
We call a simplicially enriched category $\C$ with two objects $0,1\in \C$ a \emph{directed two-object simplicially enriched category} if the following holds:
\begin{enumerate}
\item It has no other objects.
\item We have $\C(0,0) = \Delta^0$, $\C(1,1) = \Delta^0$.
\item We have $\C(1,0) = \emptyset$. 
\end{enumerate}
\end{definition}

Note that a directed two-object simplicially enriched category is determined by its mapping space $\C(0,1)$. In particular, there is a functor $F: \sSet \to \{0,1\}/\Cat^{\sSet}$, taking $K$ to the unique directed two-object category $\C(0,1) = K$. One immediately finds:

\begin{lemma}
\label{lem:colimpreserved}
The functor $F$ is left adjoint to the functor $\{0,1\}/\Cat^{\sSet} \rightarrow \sSet, \C \mapsto \C(0,1)$. Furthermore, $F$ is an equivalence onto the full subcategory $\Cat^\sSet_\di \subseteq \{0,1\}/\Cat^{\sSet}$ spanned by the directed two-object simplicially enriched categories.

In particular, directed two-object simplicially enriched categories are closed under colimits in $\{0,1\}/\Cat^\sSet$ and the functor $\sSet \rightarrow \Cat^{\sSet}$ obtained by composing $F$ with forgetting the marked objects preserves all colimits over connected diagrams.
\end{lemma}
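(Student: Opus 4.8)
The plan is to unwind what a morphism out of $F(K)$ is. A morphism $F(K)\to\C$ in $\{0,1\}/\Cat^\sSet$ is a simplicially enriched functor carrying $0\mapsto 0$ and $1\mapsto 1$; since enriched functors preserve units it must send $F(K)(0,0)=\Delta^0$ and $F(K)(1,1)=\Delta^0$ to the maps classifying $\mathrm{id}_0$ and $\mathrm{id}_1$, while on $F(K)(1,0)=\emptyset$ there is nothing to specify. Hence such a functor is the same datum as a map $K=F(K)(0,1)\to\C(0,1)$, provided every such map really does assemble into a functor; but the only composition operations internal to $F(K)$ are the canonical unit isomorphisms (which any map respects) and operations with empty domain (for which there is nothing to check), so it does. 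This yields a bijection $\Hom_{\{0,1\}/\Cat^\sSet}(F(K),\C)\cong\Hom_\sSet(K,\C(0,1))$ which is manifestly natural in $K$ and $\C$, proving that $F$ is left adjoint to $\C\mapsto\C(0,1)$. Specializing to $\C=F(K')$, whose $(0,1)$-mapping space is $K'$ by definition, and checking that the resulting bijection is exactly the map induced by $F$, shows $F$ is fully faithful.

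For the essential image, note that each $F(K)$ is a directed two-object category by construction, and conversely, given $\C\in\Cat^\sSet_\di$, the counit $F(\C(0,1))\to\C$ (adjoint to $\mathrm{id}_{\C(0,1)}$) is the identity on the $(0,1)$-mapping space, the forced map $\Delta^0\to\Delta^0$ on the $(0,0)$- and $(1,1)$-mapping spaces, and $\emptyset\to\emptyset$ on the $(1,0)$-mapping space, hence an isomorphism of enriched categories. So $F$ is an equivalence onto $\Cat^\sSet_\di$.

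The ``in particular'' is then formal. As a left adjoint, $F$ preserves all colimits; and any diagram $D$ in $\Cat^\sSet_\di$ is, up to isomorphism, of the form $F\circ D'$ for $D'$ its underlying $\sSet$-valued diagram $i\mapsto D(i)(0,1)$, so the colimit of $D$ computed in $\{0,1\}/\Cat^\sSet$ is $F(\colim D')$, again a directed two-object category. For the last claim I would invoke the standard fact that the projection from a coslice $A/\mathcal{E}\to\mathcal{E}$ preserves colimits over connected diagrams (the constant diagram at $A$ over a connected shape has colimit $A$); applying this to $A=\{0,1\}$ in $\Cat^\sSet$ and composing with the colimit-preserving $F$ shows that the forgetful composite $\sSet\to\Cat^\sSet$ preserves connected colimits. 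It cannot preserve all colimits, since $F(\emptyset)$ is the two-object category on $\{0,1\}$ with empty mapping space rather than the empty category.

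I do not anticipate a genuine obstacle here; the single point deserving care is the assertion that an arbitrary map $K\to\C(0,1)$ extends to a simplicially enriched functor out of $F(K)$, which reduces entirely to the observation that every composition operation of $F(K)$ is either a unitor or has empty domain.
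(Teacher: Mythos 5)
Your proposal is correct and follows the same route the paper takes: the paper treats the adjunction and equivalence as immediate (which your unwinding of enriched functors out of $F(K)$ justifies in detail, the key point being exactly the one you flag, that all composition maps of $F(K)$ are unitors or have empty source) and likewise deduces the final claim from the fact that the forgetful functor of a comma category preserves connected colimits.
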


For the final claim recall that the forgetful functor of any comma category preserves connected colimits.

\begin{corollary}
\label{cor:homspacepreservescolim}
The functor $\{0,1\}/\Cat^{\sSet} \to \sSet$, $\C\mapsto \C(0,1)$, preserves colimits of diagrams of directed two-object simplicially enriched categories.
\end{corollary}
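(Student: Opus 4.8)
The plan is to deduce this directly and formally from Lemma~\ref{lem:colimpreserved}. The point is that, although $\C \mapsto \C(0,1)$ is a \emph{right} adjoint and therefore not expected to preserve colimits in general, its restriction to the full subcategory $\Cat^\sSet_\di$ is an equivalence — a quasi-inverse to $F$ — and equivalences of categories preserve all colimits.

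In detail, let $\cD \colon I \to \{0,1\}/\Cat^{\sSet}$ be a diagram taking values in $\Cat^\sSet_\di$. By Lemma~\ref{lem:colimpreserved} the colimit $\colim_I \cD$ computed in $\{0,1\}/\Cat^{\sSet}$ again lies in $\Cat^\sSet_\di$; equivalently, this colimit agrees with the colimit of $\cD$ formed inside the full subcategory $\Cat^\sSet_\di$. Since $F \colon \sSet \to \Cat^\sSet_\di$ is an equivalence and satisfies $\big(F(K)\big)(0,1) = K$, the functor $(-)(0,1) \colon \Cat^\sSet_\di \to \sSet$ is a quasi-inverse to it, hence itself an equivalence, and in particular preserves colimits. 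Combining these two facts yields a natural isomorphism
\[
\big(\colim_I \cD\big)(0,1) \;\cong\; \colim_I \big(\cD_i(0,1)\big),
\]
which is exactly the assertion. (Equivalently, and even more directly: write $\cD_i \cong F(\cD_i(0,1))$, use that the left adjoint $F$ preserves colimits to get $\colim_I \cD \cong F\big(\colim_I \cD_i(0,1)\big)$ inside $\{0,1\}/\Cat^{\sSet}$, and apply $(-)(0,1)$ together with $\big(F(K)\big)(0,1)=K$ once more.)

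I do not expect any genuine obstacle here: the statement is a formal consequence of the adjunction, the equivalence onto $\Cat^\sSet_\di$, and the closure of $\Cat^\sSet_\di$ under colimits, all of which are supplied by Lemma~\ref{lem:colimpreserved}. The only subtlety worth spelling out is that the identity $\big(F(K)\big)(0,1) = K$ may only be applied once one knows that $\colim_I \cD$ still lies in $\Cat^\sSet_\di$, which is precisely the content of the closure statement cited above.
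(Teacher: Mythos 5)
Your argument is correct and is exactly the formal deduction the paper intends (the paper leaves the corollary as an immediate consequence of Lemma~\ref{lem:colimpreserved}): closure of $\Cat^\sSet_\di$ under colimits in $\{0,1\}/\Cat^{\sSet}$ means the colimit may be computed in the full subcategory, where $\C\mapsto\C(0,1)$ is a quasi-inverse to $F$ and hence preserves colimits. No gaps.
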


We can make a definition similar to \ref{def:dirsimplicial} on the side of simplicial sets:

\begin{definition}
\label{def:dirquasi}
We call an object $K \in \partial \Delta^1/\sSet$ a \emph{directed two-object simplicial set} if it can be endowed with a map $K\to \Delta^1$ such that the diagram
\[
\begin{tikzcd}
\partial \Delta^1 \dar{\mathrm{id}}\rar & K\dar\\
\partial \Delta^1 \rar & \Delta^1
\end{tikzcd}
\]
commutes and is a pullback diagram.
\end{definition}

Note immediately, that such a map $K \rightarrow \Delta^1$ is unique if one exists, and that this is the case if and only if $K$ only has two $0$-simplices $0,1$ and every simplex $\Delta^n \rightarrow K$ can be decomposed (necessarily uniquely) as $\Delta^n = \Delta^i * \Delta^j \rightarrow K$, constant with value $0$ on $\Delta^i$ and constant with value $1$ on $\Delta^j$. 

\begin{example}
All three of $S(K), S^L(K)$ and $S^R(K)$ are naturally directed two-object simplicial sets.
\end{example}

Again the following is immediate:
\begin{lemma}
\label{lem:directedssetcolim}
The directed two-object simplicial sets form a full subcategory $\sSet_{\di} \subseteq \partial \Delta^1/\sSet$ closed under all colimits and the forgetful functor $\sSet_{\di}\to \sSet$ commutes with all connected colimits.
\end{lemma}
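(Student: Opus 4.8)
The plan is to verify closure under colimits by hand, using the combinatorial description of $\sSet_{\di}$ recorded after Definition~\ref{def:dirquasi}: an object $K$ of $\partial\Delta^1/\sSet$ lies in $\sSet_{\di}$ precisely when its only $0$-simplices are the two vertices $0,1$ coming from $\partial\Delta^1$ and every simplex of $K$ decomposes uniquely as $\Delta^i*\Delta^j\to K$, constant with value $0$ on $\Delta^i$ and with value $1$ on $\Delta^j$ (with $i,j\geq -1$ and $i+j+1$ the dimension); equivalently, $K$ admits a (then unique) map $K\to\Delta^1$ under $\partial\Delta^1$ exhibiting it as a pullback of $\partial\Delta^1\hookrightarrow\Delta^1$. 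Fullness of $\sSet_{\di}\subseteq\partial\Delta^1/\sSet$ holds by definition. Since the forgetful functor $\partial\Delta^1/\sSet\to\sSet$ creates connected colimits, and since every colimit in $\partial\Delta^1/\sSet$ is a coproduct --- that is, a (possibly empty, possibly infinite) wide pushout along $\partial\Delta^1$ --- of connected colimits, it is enough to treat connected colimits and coproducts separately.

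The one structural fact the argument rests on is that, for $K,K'\in\sSet_{\di}$, every morphism $f\colon K\to K'$ in $\partial\Delta^1/\sSet$ automatically commutes with the structure maps to $\Delta^1$: both $K\to\Delta^1$ and $K\xto{f}K'\to\Delta^1$ are maps under $\partial\Delta^1$, and there is at most one such, a map from a simplicial set with two vertices to the nerve $\Delta^1$ being determined by its effect on vertices. Hence for a connected diagram $D\colon I\to\sSet_{\di}$ the maps $D(\alpha)\to\Delta^1$ form a cocone and induce $q\colon K\to\Delta^1$ on the colimit $K:=\colim_I D$ (computed in $\sSet$, which also computes the colimit in $\partial\Delta^1/\sSet$). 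As $I$ is connected, $K_n=\colim_I D(-)_n$ in $\Set$; so if a simplex of $K$ maps under $q$ to the totally-$0$ simplex of $\Delta^1$, any representative $\tau\in D(\alpha)_n$ has $q_\alpha(\tau)$ totally-$0$, forcing $\tau$ to be the unique constant-$0$ simplex of $D(\alpha)$, and connectedness of $I$ together with the fact that the structure maps preserve constant-$0$ simplices shows that all such representatives define the same simplex of $K$. Thus $q^{-1}(\{0\})=\Delta^0\subseteq K$, and symmetrically $q^{-1}(\{1\})=\Delta^0$; since $0\neq 1$ in $\Delta^1$ these two vertices of $K$ stay distinct. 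Hence $q$ exhibits $K$ as a pullback of $\partial\Delta^1\hookrightarrow\Delta^1$, so $K\in\sSet_{\di}$; by fullness $K$ is also the colimit in $\sSet_{\di}$, and its underlying simplicial set is the $\sSet$-colimit, which establishes the last sentence of the lemma.

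For coproducts, the empty one is $\mathrm{id}\colon\partial\Delta^1\to\partial\Delta^1$, which is visibly directed two-object. For a nonempty family $\{A_c\}_{c\in C}$ in $\sSet_{\di}$, the coproduct $B$ in $\partial\Delta^1/\sSet$ is the wide pushout obtained by gluing all the $A_c$ along $\partial\Delta^1$; since $\partial\Delta^1\hookrightarrow A_c$ is a monomorphism (the two vertices being distinct), $B_n$ is the disjoint union, over $c\in C$, of $(A_c)_n$ with its two constant simplices removed, together with one extra class of constant-$0$ and one of constant-$1$ simplices. The maps $A_c\to\Delta^1$ agree on $\partial\Delta^1$, hence induce $q\colon B\to\Delta^1$; the same bookkeeping shows $q^{-1}(\{0\})=q^{-1}(\{1\})=\Delta^0$, so $B\in\sSet_{\di}$. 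Together with the connected case this gives closure of $\sSet_{\di}$ under all colimits in $\partial\Delta^1/\sSet$.

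The only genuinely substantive point, and the one I would be most careful to get right, is the computation of the fibres of $q$ over the two vertices in the connected-colimit step: this is exactly where one needs both that morphisms of directed two-object simplicial sets respect the maps to $\Delta^1$ (making the ``jump'' decomposition of simplices functorial, so that passing to the colimit over a fixed vertex yields a single simplex again) and that $\Delta^1$ separates $0$ from $1$ (so the colimit does not collapse the two vertices). Everything else is routine chasing of levelwise colimits in $\Set$.
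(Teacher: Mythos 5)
Your proof is correct and follows exactly the route the paper has in mind: the paper states this lemma without proof (``the following is immediate''), relying on the uniqueness of the structure map to $\Delta^1$, the decomposition of arbitrary colimits into coproducts of connected colimits, and the standard fact that the forgetful functor of a coslice category creates connected colimits. You have simply written out the levelwise fibre computation that the paper leaves to the reader, and it checks out.
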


\begin{proposition}
\label{lem:bisimplicial}
The functor $\J: \bbDelta\times \bbDelta \to \sSet_{\di}$, 
\[
(i,j)\longmapsto (\Delta^i * \Delta^j) \amalg_{(\Delta^i\amalg \Delta^j)} (\Delta^0\amalg\Delta^0)
\]
gives rise to an equivalence
\[|\cdot|_\J \colon \ssSet \longrightarrow \sSet_\di \cocolon \Sing_\J.\]
\end{proposition}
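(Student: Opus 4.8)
The plan is to exhibit $\J$ as a restriction of the Yoneda-type data that presents $\sSet_\di$ as a presheaf category, and then invoke a standard recognition criterion for when a colimit-extension adjunction $|\cdot|_T \dashv \Sing_T$ is an equivalence. Concretely, since $\ssSet = \mathcal P(\bbDelta\times\bbDelta)$, the adjunction $|\cdot|_\J \dashv \Sing_\J$ is an equivalence if and only if $\J$ is fully faithful and its essential image is a \emph{dense} generating set in $\sSet_\di$ consisting of \emph{tiny} (i.e.\ internally projective, hence colimit-corepresenting) objects — equivalently, if $\Sing_\J$ is fully faithful and $|\cdot|_\J$ is conservative, or most cleanly: if the $\J_{i,j}$ form a dense subcategory of $\sSet_\di$ closed under the relevant colimits. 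I would phrase it via density: one must check (a) $\J$ is fully faithful, and (b) every object of $\sSet_\di$ is canonically a colimit of the $\J_{i,j}$ mapping into it, with this colimit computed in $\sSet_\di$ along the same diagram category that computes the representing presheaf.

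First I would set up the key bookkeeping device. By the remark following Definition~\ref{def:dirquasi}, a directed two-object simplicial set $K$ is exactly a simplicial set with chosen $0$-simplices $0,1$ such that every $n$-simplex splits uniquely as a join $\Delta^i * \Delta^j$ constant at $0$ on the first factor and at $1$ on the second. This splitting is precisely the statement that the simplices of $K$ are naturally indexed by $\bbDelta\times\bbDelta$: an $(i,j)$-bisimplex of the associated bisimplicial set $\dec(K)$ is a map $\J_{i,j}\to K$. This gives a natural isomorphism $\dec(K)_{i,j}\cong \Hom_{\sSet_\di}(\J_{i,j},K)$, i.e.\ $\dec = \Sing_\J$; so the content is that $\dec$ is an equivalence with inverse $|\cdot|_\J$. (One subtlety: the empty $n$-simplex split does not arise, since $K$ always has both vertices $0$ and $1$; the indexing is genuinely over all of $\bbDelta\times\bbDelta$ because $\Delta^i * \Delta^j$ with $i$ or $j$ allowed to be $\varnothing$ — but here $i,j\ge 0$ always, matching that the two basepoints are always present. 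I would make sure the bookkeeping of this edge case is airtight.)

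Then I would prove (a) and (b). For (a), full faithfulness of $\J$: a map $\J_{i,j}\to\J_{i',j'}$ in $\sSet_\di$ must preserve the two basepoints, hence preserve the $0$-constant and $1$-constant parts, and by the unique join-decomposition it is the join of a simplicial operator $[i]\to[i']$ and $[j]\to[j']$; conversely every such pair of operators induces a well-defined map. This identifies $\Hom_{\sSet_\di}(\J_{i,j},\J_{i',j'})\cong \Hom_{\bbDelta}([i],[i'])\times\Hom_{\bbDelta}([j],[j'])$, as needed. For (b), I would show that the canonical map $\colim_{\J_{i,j}\to K}\J_{i,j}\to K$ (colimit over the category of elements of $\dec(K)$) is an isomorphism in $\sSet_\di$: by Lemma~\ref{lem:directedssetcolim} colimits in $\sSet_\di$ over connected diagrams agree with those in $\sSet$, and the category of elements here is connected (both basepoints are in the image of any $\J_{i,j}$, so all components are linked); then the isomorphism follows from the ordinary co-Yoneda fact that $\dec(K)$ is the colimit of its representables together with the observation that $|\dec(K)|_\J$ — built cell by cell from the $\J_{i,j}$ — reassembles exactly the simplices of $K$ under the unique join-decomposition. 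Equivalently, one checks $|\cdot|_\J$ and $\Sing_\J$ are mutually inverse directly: $\Sing_\J\circ|\cdot|_\J\cong\mathrm{id}$ on representables (that is precisely (a) plus the co-Yoneda lemma) and on general presheaves by colimit-preservation, and $|\cdot|_\J\circ\Sing_\J\cong\mathrm{id}$ is (b).

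The main obstacle I expect is (b) — more precisely, verifying that the colimit defining $|\dec(K)|_\J$, which a priori lives in $\sSet$ and is glued from joins $\Delta^i*\Delta^j$, really does reproduce $K$ rather than something larger or with extra degeneracies. The crux is the uniqueness half of the join-decomposition statement (a simplex of $K$ lies over a unique $(i,j)$ and has a unique such presentation), which prevents overcounting, combined with connectedness of the diagram so that the two basepoints are not duplicated; once these two points are nailed down, the rest is the formal co-Yoneda argument. I would be careful to check connectedness honestly, since it is what licenses passing between colimits in $\sSet_\di$ and in $\sSet$ via Lemma~\ref{lem:directedssetcolim}.
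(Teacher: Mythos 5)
Your overall route is close to the paper's: both arguments reduce to a recognition criterion for the adjunction $|\cdot|_\J \dashv \Sing_\J$, checking full faithfulness of $\J$, smallness (tininess) of the objects $\J_{i,j}$ in $\sSet_\di$, and one further condition ensuring the counit is invertible --- the paper uses conservativity of $\Sing_\J$, you use density; either works. However, one step of your verification of density would fail as written. The claim that the category of elements of $\dec(K)$ is connected is false, and the parenthetical justification (``both basepoints are in the image of any $\J_{i,j}$'') conflates the index category with the image of the diagram. Take $K$ to consist of two parallel edges from $0$ to $1$, i.e.\ $K = S(\Delta^0 \amalg \Delta^0)$: then $\dec(K)$ is a disjoint union of two copies of the representable $(0,0)$-bisimplex, so its category of elements has two components. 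Computing the colimit of the $\J_{i,j}$ in $\sSet$ over this diagram, as your appeal to Lemma~\ref{lem:directedssetcolim} would have it, produces $\Delta^1 \amalg \Delta^1$ with four vertices rather than $K$. The correct statement is that the colimit in $\sSet_\di \subseteq \partial\Delta^1/\sSet$ is computed in $\sSet$ only after adjoining the initial object $\partial\Delta^1$ as a cone point, i.e.\ after gluing all the basepoints; this is exactly the maneuver the paper performs in the proof of Lemma~\ref{lem:nerveadjointpreservestwoobject}. With that correction, your argument for (b) goes through using the partition of $K_n$ into the two constant simplices and $\coprod_{i+j=n-1}\Hom_{\sSet_\di}(\J_{i,j},K)$, which is the content of your ``unique join-decomposition'' observation.

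The second issue is that tininess of the $\J_{i,j}$ --- that $\Hom_{\sSet_\di}(\J_{i,j},-)$ preserves colimits --- is stated in your criterion but never verified; it is silently invoked in ``on general presheaves by colimit-preservation.'' It is genuinely needed: full faithfulness plus density alone only make $\Sing_\J$ fully faithful, not an equivalence (compare the full subcategory of $\mathrm{Set}$ on a one- and a two-element set, which is dense but whose nerve functor is not essentially surjective). The verification is where the partition formula does real work: it exhibits $\Hom_{\sSet_\di}(\J_{i,j},K)$ as a natural direct summand of $K_{i+1+j}$, and combined with the explicit description of colimits in $\sSet_\di$ this yields smallness. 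The paper's proof carries out precisely this check; your write-up should too.
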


The functor $\Sing_\J$ is the functor $\dec$, for decomposition, from the introduction to this section, and we shall use that name once the proposition is established; decoding definitions $\dec(X)$ is explicitly given by
\[(i,j) \longmapsto \Hom_{\sSet/\Delta^1}(\Delta^i * \Delta^j, X)\]
where the structure map on the left is $\Delta^i * \Delta^j \rightarrow \Delta^0 * \Delta^0 = \Delta^1$.
\begin{proof}
We will employ the following standard criterion: Suppose given a small category $A$, a cocomplete category $B$, and a functor $F: A \to B$ such that 
\begin{enumerate}
\item $F$ is fully faithful and takes values in the small objects of $B$ (i.e. those $x \in B$ for which $\Hom_B(x,-)$ commutes with colimits), and
\item the associated functor $\Sing_F$ is conservative.
\end{enumerate}
Then the adjunction
\[|\cdot|_F \colon \mathcal P(A) \longleftrightarrow B \cocolon \Sing_F\]
consists of inverse equivalences, since by (1) the unit $\mathrm{id}_{\mathcal P(A)} \Rightarrow \Sing_F \circ |\cdot|_F$ is a transformation between colimit preserving functors, that is an equivalence of representables, thus an equivalence itself, and by (2) the triangle equality then gives the same for the counit. 

These assumptions are satisfied for the functor $J: \bbDelta\times\bbDelta\to \sSet_{\di}$: To see that it is fully faithful observe first that
\[\Hom_{\sSet_\di}(\J_{i,j},\J_{k,l}) \cong \Hom_{\Delta^i \amalg \Delta^j/\sSet}(\Delta^i * \Delta^j, \J_{i,j})\]
But then since $\Delta^i * \Delta^j$ is small in $\sSet$, this receives a surjection from the subset of $\Hom_\sSet(\Delta^i * \Delta^j, \Delta^k * \Delta^l)$ taking $\Delta^i$ to $\Delta^k$ and $\Delta^j$ to $\Delta^l$. But by definition of $\J_{k,l}$ this projection identifies two maps if their entire images lie in either $\Delta^k$ or $\Delta^l$, which is impossible, so it is in fact an isomorphism. Since maps between simplices are determined by their effect on vertices we find the described subset isomorphic to $\Hom_{\bbDelta^{\times 2}}((i,j),(k,l))$ and the composite isomorphism is obviously a retract to the effect of $\J$.

To see that $\J$ takes values in the small objects note that the $n$-simplices of an object $K\in \sSet_{\di}$ come with a natural partition
\begin{equation}
\label{eq:partitionbisimplices}
K_n = \{0\}\amalg \{1\} \amalg \coprod_{i+j=n-1} \Hom_{\sSet_{\di}}(\J_{i,j}, K)\tag{$\ast$}
\end{equation}
witnessing the fact that every $n$-simplex $\sigma: \Delta^n \to K_n$ comes with a partition of its vertices into $\sigma^{-1}(0), \sigma^{-1}(1) \subseteq [n]$ and factors canonically over the respective quotient of $\Delta^n$. The formula for colimits in $\sSet_\di \subseteq \{0,1\}/\sSet$ established above then immediately implies that $\J_{i,j}$ is indeed small, and it also clear from \eqref{eq:partitionbisimplices} that $\Sing_\J$ is conservative. 
\end{proof}

Let us denote the exterior product $\sSet \times \sSet \rightarrow \ssSet$ by $\boxtimes$.

\begin{example}
\label{ex:cutjoin}
It is easy to understand the images of the relevant simplicial sets under this equivalence:
\begin{enumerate}
\item There are canonical natural isomorphisms
\[\dec(S^L(\Delta^n)) \cong \Delta^n \boxtimes \Delta^0 \quad \text{and} \quad \dec(S^R(\Delta^n)) \cong \Delta^0 \boxtimes \Delta^n\] 
since a map $\Delta^i * \Delta^j \rightarrow S^L(\Delta^{n})$ over $\Delta^1$ is the same as a map $\Delta^i * \Delta^j \rightarrow \Delta^{n+1}$ taking $\Delta^i$ to the front face and $\Delta^j$ to the vertex $n+1$.

Since the functor $S^L \colon \sSet \rightarrow \sSet_\di$ is easily checked to preserve colimits, we deduce that generally $\dec(S^LX) \simeq X \boxtimes \Delta^0$ and similarly for $S^R$.
\item Similarly, if we denote by $\Cut^n$ denote the bisimplicial set whose $i,j$-simplices are given by
\[
\Cut^n_{i,j} = \Hom_\sSet(\Delta^i * \Delta^j, \Delta^n)
\]
we find 
\[\dec(S\Delta^n) \cong \Cut^n\]
naturally in $n \in \bbDelta$ and thus generally
\[\dec(SX) \cong |X|_{\Cut}\]
by regarding $\Cut$ as a functor $\bbDelta \rightarrow \ssSet$; since simplices are small in $\sSet$ we find $|X|_{\Cut}$ given in bidegree $(i,j)$ by $\Hom_{\sSet}(\Delta^i * \Delta^j, X)$.
\item Under these identifications the natural maps 
\[S^L(K) \longleftarrow SK \longrightarrow S^R(K)\]
giving rise to the comparisons of mapping spaces correspond to the maps
\[\Delta^n \boxtimes \Delta^0 \longleftarrow \Cut^n \longrightarrow \Delta^0 \boxtimes \Delta^n\]
that simply restrict a map $\Delta^i * \Delta^j \rightarrow \Delta^n$ to $\Delta^i$ and $\Delta^j$, respectively.
\end{enumerate}
\end{example}

Next, let us observe that the coherent nerve makes these two notions of two-object objects correspond:

\begin{lemma}
\label{lem:nerveadjointpreservestwoobject}
The adjunction $\fC \colon \sSet \longleftrightarrow \Cat^\sSet \cocolon \Nc$ induces an adjunction 
\[\fC \colon \sSet_\di \longleftrightarrow \Cat^\sSet_\di \cocolon \Nc.\]
\end{lemma}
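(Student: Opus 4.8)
I would deduce the restricted adjunction formally from the unrestricted one, the only non-formal input being that $\fC$ and $\Nc$ respect the two full subcategories in play. The first point is that, since $\fC$ preserves colimits, $\fC(\partial\Delta^1)=\fC(\Delta^0)\amalg\fC(\Delta^0)$ is the discrete simplicially enriched category on two objects $0,1$, and dually $\Nc$ of that category is $\partial\Delta^1$. The adjunction $\fC\dashv\Nc$ therefore induces an adjunction between coslice categories,
\[\fC\colon\partial\Delta^1/\sSet\;\longleftrightarrow\;\{0,1\}/\Cat^\sSet\;\cocolon\Nc,\]
with $\fC$ sending $(\partial\Delta^1\to K)$ to $(\fC(\partial\Delta^1)\to\fC(K))$ and $\Nc$ sending $(\{0,1\}\to\C)$ to $(\partial\Delta^1\to\Nc(\C))$. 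Since $\sSet_\di\subseteq\partial\Delta^1/\sSet$ and $\Cat^\sSet_\di\subseteq\{0,1\}/\Cat^\sSet$ are \emph{full}, it remains only to show that $\fC$ carries $\sSet_\di$ into $\Cat^\sSet_\di$ and that $\Nc$ carries $\Cat^\sSet_\di$ into $\sSet_\di$; the adjunction bijection is then inherited.

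For $\Nc$ this is quick. By Lemma~\ref{lem:colimpreserved}, $\Cat^\sSet_\di$ is equivalent to $\sSet$ and so has a terminal object, which one checks to be $\fC(\Delta^1)$; let $\C\to\fC(\Delta^1)$ be the resulting canonical functor for $\C\in\Cat^\sSet_\di$. Applying $\Nc$ gives a map $\Nc(\C)\to\Nc(\fC(\Delta^1))=\Delta^1$, and together with the structure map $\partial\Delta^1=\Nc(\{0,1\})\to\Nc(\C)$ I claim this exhibits $\Nc(\C)$ as a directed two-object simplicial set. By the criterion recorded right after Definition~\ref{def:dirquasi} it is enough to see that $\Nc(\C)$ has exactly two $0$-simplices, which holds since $(\Nc\C)_0=\mathrm{Ob}(\C)=\{0,1\}$, and that every $n$-simplex decomposes as $\Delta^i*\Delta^j$ with the two halves constant at $0$ and $1$; but an $n$-simplex is a simplicially enriched functor $\fC(\Delta^n)\to\C$, and since $\fC(\Delta^n)(k,l)$ is nonempty precisely when $k\le l$ while $\C(1,0)=\emptyset$, the induced map of object sets $\{0,\dots,n\}\to\{0,1\}$ can only consist of an initial run of $0$'s followed by $1$'s, which is exactly that decomposition.

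For $\fC$ I would argue by reduction to generators. As a left adjoint, $\fC\colon\partial\Delta^1/\sSet\to\{0,1\}/\Cat^\sSet$ preserves colimits; the inclusions $\sSet_\di\hookrightarrow\partial\Delta^1/\sSet$ and $\Cat^\sSet_\di\hookrightarrow\{0,1\}/\Cat^\sSet$ are closed under colimits by Lemmas~\ref{lem:directedssetcolim} and~\ref{lem:colimpreserved}; and by Proposition~\ref{lem:bisimplicial} every object of $\sSet_\di\simeq\ssSet$ is a canonical colimit of the objects $\J_{i,j}$. Hence it suffices to show $\fC(\J_{i,j})\in\Cat^\sSet_\di$ for all $i,j$. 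Applying $\fC$ to the defining pushout presents $\fC(\J_{i,j})$ as the pushout in $\Cat^\sSet$ of
\[\fC(\Delta^0)\amalg\fC(\Delta^0)\;\longleftarrow\;\fC(\Delta^i)\amalg\fC(\Delta^j)\;\hookrightarrow\;\fC(\Delta^{i+j+1}),\]
where the right-hand map identifies $\fC(\Delta^i)$ and $\fC(\Delta^j)$ with the full subcategories of $\fC(\Delta^{i+j+1})$ on $\{0,\dots,i\}$ and on $\{i+1,\dots,i+j+1\}$ and the left-hand map collapses each of them. I would then verify directly that this pushout has exactly the two objects $0,1$ (the images of the two collapsed faces), that the morphism complex from $1$ to $0$ is empty, and that both endomorphism complexes are $\Delta^0$, so that $\fC(\J_{i,j})$ lies in $\Cat^\sSet_\di$.

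The hard part is exactly this last pushout computation, since pushouts in $\Cat^\sSet$ are delicate and one has to rule out morphisms being accidentally created by the identifications. Everything, however, follows from two features of $\fC(\Delta^{i+j+1})$: each of its morphisms runs from some $c$ to some $d$ with $c\le d$, so no composite of such — hence no morphism of the pushout — can run from $1$ to $0$, and, the subsets $\{0,\dots,i\}$ and $\{i+1,\dots,i+j+1\}$ being disjoint, no endo-composite of $0$ (or of $1$) can pass through the other object; moreover the collapsing functors $\fC(\Delta^i)\to\fC(\Delta^0)$ and $\fC(\Delta^j)\to\fC(\Delta^0)$ send \emph{every} morphism to an identity, so any endo-composite of $0$, being assembled out of morphisms among $\{0,\dots,i\}$ only, is forced to be the identity. (Alternatively this step can be read off from the necklace description of $\fC$ of \cite{DS}, but the above keeps the argument self-contained.) Granting it, $\fC$ restricts to a functor $\sSet_\di\to\Cat^\sSet_\di$, and the lemma follows.
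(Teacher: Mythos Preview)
Your proof is correct and follows essentially the same route as the paper's. Both arguments lift the adjunction to the coslices under $\partial\Delta^1$ and $\{0,1\}$, and then check that each functor lands in the relevant full subcategory; the $\Nc$-direction is a direct inspection in both cases, and for the $\fC$-direction both reduce via Proposition~\ref{lem:bisimplicial} to showing $\fC(\J_{i,j})\in\Cat^\sSet_\di$, with the necklace description of \cite{DS} offered as an alternative.

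The only substantive difference is how that last point is discharged. The paper simply forward-references Lemma~\ref{lem:homspacebisimplex}, which explicitly computes $\fC(\J_{i,j})$ by verifying a universal property; you instead sketch a direct analysis of the pushout. Your sketch is correct in spirit, but its rigour rests on the assertion that every morphism in a pushout in $\Cat^\sSet$ is ``assembled out of'' images of morphisms from the summands---true, but not entirely elementary, and essentially the content of either the necklace description or the argument of Lemma~\ref{lem:homspacebisimplex}. If you want to avoid that, note that $\fC(\J_{i,j})(1,0)=\emptyset$ follows more cleanly from the functor $\fC(\J_{i,j})\to\fC(\Delta^1)$ induced by the structure map $\J_{i,j}\to\Delta^1$, and the triviality of the endomorphism monoids can then be obtained by a short Yoneda argument testing against directed two-object categories with $\C(0,1)=\Delta^0$. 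One place where your write-up is genuinely smoother: by working with the coslice adjunction throughout you bypass the paper's manoeuvre of adjoining the initial object $\{0,1\}$ to force the colimit diagram to be connected.
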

\begin{proof}
$\fC$ sends the discrete simplicial set $\{0,1\}$ to the discrete simplicially-enriched category $\{0,1\}$, and $\Nc$ does the opposite. Thus, we obtain an adjunction between slice categories ${\{0,1\}/}\sSet \leftrightarrow {\{0,1\}/}\Cat^{\sSet}$, that we claim restricts to the one from the statement. Given on the one hand an object $K\in \sSet_{\di}$, it is immediate for example from the necklace description of mapping complexes in \cite[4.4 Corollary]{DS} that $\fC(K) \in \Cat^\sSet_\di$; this also follows from \ref{lem:homspacebisimplex} below (which is independent of the present discussion): By \ref{lem:bisimplicial} $K$ is the colimit of the objects $\J_{i,j} \in \sSet_{\di}$ over the tautological diagram indexed by the category of (bi)simplices of $\dec(K)$. By adding the initial object $\{0,1\}$ of $\sSet_{\di}$ to that diagram, we get a colimit diagram which is connected and hence preserved by the forgetful functor to $\sSet$ by \ref{lem:directedssetcolim}. By \ref{lem:homspacebisimplex} $\fC$ takes all the terms of this diagram to directed two-object simplicially enriched categories, so since $\fC: \sSet\to \Cat^{\sSet}$ preserves colimits, and the forgetful functor $\{0,1\}/\Cat^{\sSet}\to \Cat^\sSet$ detects connected colimits, we see that $\fC$ takes $\sSet_{\di}$ to $\Cat^{\sSet}_{\di}$.

The analogous statement for $\Nc$ follows by direct inspection: if $\C$ is a directed two-object simplicially enriched category, then $\Nc(\C)$ has two $0$-simplices $0,1$. A functor from $\fC[\Delta^n]\to \C$ necessarily is an increasing map $[n]\to [1]$ on objects, and thus induces a map $\Nc(\C)\to \Delta^1$. Functors $\fC[\Delta^n]\to \C$ sending all objects to $0$ or all objects to $1$ are constant, so $\partial \Delta^1 \times_{\Delta^1} \Nc(\C) = \partial\Delta^1$ as required.
\end{proof}

Under the equivalences $\sSet_\di \simeq \ssSet$ and $\Cat^\sSet_\di \simeq \sSet$ from \ref{lem:colimpreserved} and \ref{lem:bisimplicial} the functor $\fC \colon \sSet_\di \rightarrow \Cat^\sSet_\di$ therefore corresponds to a colimit preserving functor $\ssSet \rightarrow \sSet$. Unwinding definitions its governing bicosimplicial object is
\[(i,j) \longmapsto \big(\fC(\J_{i,j})\big)(0,1),\]
which is the object $\Cb$ from the introduction to this section. 

We immediately note that by \ref{ex:cutjoin} we have $\Sg_n \cong |\Cut^n|_\Cb$, whereas 
\[\QL_n \cong |\Delta^n \boxtimes \Delta^0|_\Cb = \Cb_{n,0} \quad \text{and} \quad \QR_n \cong |\Delta^0 \boxtimes \Delta^n|_\Cb = \Cb_{0,n},\]
naturally in $n \in \bbDelta$.
Now the computation of the left morphism complexes in $\Nc(\C)$ is achieved by understanding $\QL$, see \cite[Section 2.2.2]{HTT}, and the analysis immediately generalises to give:

\begin{lemma}
\label{lem:homspacebisimplex}
For $i,j\geq 0$, we have that $\fC[\J_{i,j}]$ is the two-object directed simplicially enriched category with mapping space
\[
\Cb_{i,j} = \mathrm{N}(\{S \subseteq [i] * [j] \mid [i] \cap S, [j] \cap S \neq \emptyset\})/\sim
\]
where two chains of subsets $S_0\subseteq \ldots \subseteq S_n$ and $S'_0\subseteq \ldots \subseteq S'_n$ are identified if there exist $i_0\in [i]$ and $j_0\in [j]$ with $\{i_0,j_0\}\subseteq S_0, S_0'$ and $S_k\cap [i_0,j_0] = S_k'\cap [i_0,j_0]$ for all $k$.

The identification is natural for $(i,j) \in \bbDelta^{\times 2}$ for the obvious bicosimplicial structure on the right hand side.
\end{lemma}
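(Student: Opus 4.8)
The plan is to realise $\J_{i,j}$ as a pushout of simplices, transport it across the colimit-preserving functor $\fC\colon\sSet\to\Cat^\sSet$, and compute the mapping space in the resulting pushout of simplicially enriched categories directly; this is the same strategy by which $\QL=\Cb_{\bullet,0}$ is treated in \cite[Section 2.2.2]{HTT}, only carried out at both ends simultaneously.

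First I would record that $\Delta^i*\Delta^j=\Delta^{i+j+1}$ and that, by definition, $\J_{i,j}$ is the pushout in $\sSet$ of the inclusion $\Delta^i\amalg\Delta^j\hookrightarrow\Delta^{i+j+1}$ of the front and back faces along the collapse $\Delta^i\amalg\Delta^j\to\Delta^0\amalg\Delta^0$. Applying $\fC$ exhibits $\fC[\J_{i,j}]$ as the pushout of $\fC[\Delta^i]\amalg\fC[\Delta^j]\to\fC[\Delta^{i+j+1}]$ along $\fC[\Delta^i]\amalg\fC[\Delta^j]\to\{0\}\amalg\{1\}$, where $\fC[\Delta^i]$ and $\fC[\Delta^j]$ are the \emph{full} subcategories of $\fC[\Delta^{i+j+1}]$ on the front vertices $\{0,\dots,i\}$ and the back vertices $\{i+1,\dots,i+j+1\}$, with mapping spaces $\mathrm N(\{S\subseteq[a,b]\mid a,b\in S\})$ (empty when $a>b$) and composition given by union of subsets. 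The object set of the pushout is visibly $\{0,1\}$; fullness of $\fC[\Delta^i]$ and $\fC[\Delta^j]$ forces all of their morphisms to become identities, so the pushout has $\Delta^0$ for endomorphisms of $0$ and of $1$, and it has no morphisms $1\to 0$ since $\fC[\Delta^{i+j+1}]$ has none from a back to a front vertex. Thus $\fC[\J_{i,j}]$ is a directed two-object category and the content is the computation of $\Cb_{i,j}=\fC[\J_{i,j}](0,1)$.

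The step I expect to be the main obstacle — pushouts in $\Cat^\sSet$ being delicate in general — is to show that $\Cb_{i,j}$ is the quotient of $\coprod\mathrm N(\{S\subseteq[a,b]\mid a,b\in S\})$, the coproduct over front vertices $a$ and back vertices $b$, by the simplicial equivalence relation generated by the two families of moves $S_\bullet\sim S_\bullet\cup T_\bullet$ and $S_\bullet\sim U_\bullet\cup S_\bullet$ (termwise union of chains), where $T_\bullet$ ranges over the chains in $\mathrm N(\{S\subseteq[a,a']\mid a,a'\in S\})$ for front vertices $a\le a'$, and symmetrically for $U_\bullet$ at the back. What makes this tractable is that $\fC[\Delta^{i+j+1}]$ is \emph{directed}: it has no morphism from a back to a front vertex, and $\{0\}\amalg\{1\}$ has only identities; hence in the standard presentation of morphisms in a pushout of enriched categories by alternating zig-zags, every zig-zag from $0$ to $1$ is monotone and collapses to a single morphism of $\fC[\Delta^{i+j+1}]$ from a front to a back vertex, and the only relations that survive are pre- and post-composition with morphisms of $\fC[\Delta^i]$ and $\fC[\Delta^j]$, i.e.\ the moves above. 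This is exactly the bookkeeping done for $\QL$ in loc.\ cit., now performed at both ends.

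It remains to identify this quotient with $\mathrm N(P)/{\sim}$, where $P=\{S\subseteq[i]*[j]\mid S\cap[i]\neq\emptyset\neq S\cap[j]\}$, via the simplicial map sending a chain in $\mathrm N(\{S\subseteq[a,b]\mid a,b\in S\})$ to the same chain in $\mathrm N(P)$. This is well-defined (each such $S$ contains $a\le i$ and $b>i$), and each move becomes an instance of $\sim$ — for example $S_\bullet$ and $S_\bullet\cup T_\bullet$ are $\sim$-equivalent via $i_0=a'$, $j_0=b$, because $T_k\cap[a',b]=\{a'\}\subseteq S_k$. The resulting map to $\mathrm N(P)/{\sim}$ is surjective because any chain $S_0\subseteq\dots\subseteq S_n$ in $P$ is $\sim$-equivalent, via $i_0=\min S_0$ and $j_0=\max S_0$, to its termwise intersection with the interval $[\min S_0,\max S_0]$, which lies in $\mathrm N(\{S\subseteq[\min S_0,\max S_0]\mid \min S_0,\max S_0\in S\})$; and it is injective because if chains $S_\bullet$ and $S'_\bullet$ coming from pieces $(a,b)$ and $(a',b')$ become $\sim$-equivalent via some $i_0,j_0$, then one move from each family carries $(a,b,S_\bullet)$ and $(a',b',S'_\bullet)$ respectively to $(i_0,j_0,S_\bullet\cap[i_0,j_0])$ and $(i_0,j_0,S'_\bullet\cap[i_0,j_0])$, which agree by the definition of $\sim$. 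Naturality in $(i,j)\in\bbDelta\times\bbDelta$ is then routine, since a morphism acts on both sides by the join of the two underlying order maps, hence by direct image of subsets, which preserves the conditions defining $P$ as well as the relation $\sim$.
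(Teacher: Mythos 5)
Your route is genuinely different from the paper's: you construct the pushout $\fC[\J_{i,j}]$ explicitly in $\Cat^{\sSet}$ and then match the resulting quotient against $\mathrm{N}(P)/{\sim}$, whereas the paper never builds the pushout at all --- it computes the corepresented functor, using the adjunction $\fC\dashv\Nc$ to identify maps $\J_{i,j}\to\Nc(\C)$ with functors $\fC[\Delta^{i+1+j}]\to\C$ that are constant on the front and back full subcategories, and then observes that such functors are exactly maps out of $\mathrm{N}_{i,j}$ via an (essentially tautological) pushout square of simplicial sets built from the composition maps $\fC(l,i+1+j)\times\fC(k,l)\times\fC(0,k)\to\fC(0,i+1+j)$. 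Your endgame --- well-definedness via $(i_0,j_0)=(a',b)$, surjectivity via truncation to $[\min S_0,\max S_0]$, injectivity via truncation to $[i_0,j_0]$ --- is correct, and in fact spells out in more detail what the paper compresses into ``essentially by definition''.

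The weak link is the step you yourself flag: the presentation of the $(0,1)$-mapping space of the pushout as $\coprod_{a\le i<b}\fC(a,b)$ modulo the pre-/post-composition moves. The pushout is taken along a functor that is \emph{not} injective on objects (all front vertices collapse to $0$, all back vertices to $1$), so the ``standard presentation by alternating zig-zags'' is not an off-the-shelf fact here, and your reduction of a zig-zag to a single morphism is not purely formal: if consecutive entries meet at distinct front vertices $b\neq b'$ with $b>b'$, there is no front morphism $b\to b'$ to insert and absorb. The claimed presentation is nevertheless correct, and the clean way to certify it is corepresentability: check that the directed two-object category $F(Q)$, with $Q$ your quotient, admits the cone maps and that enriched functors $F(Q)\to\C$ over $(x,y)$ are precisely families $\phi_{a,b}\colon\fC(a,b)\to\C(x,y)$ invariant under pre- and post-composition with front and back morphisms --- visibly the same data as a functor $\fC[\Delta^{i+1+j}]\to\C$ constant on front and back, since the only nontrivial compositions involving a front-to-back morphism are exactly those pre- and post-compositions. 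That check is, in effect, the paper's proof; with it inserted in place of the zig-zag heuristic, your argument is complete.
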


Decoding, one finds $\Cb_{i,j}$ a quotient of an $i+j$-dimensional cube with a bunch of faces collapsed to lower-dimensional cubes.

\begin{proof}
Let us abbreviate
\[\mathrm{N}(\{S \subseteq [i] * [j] \mid [i] \cap S, [j] \cap S \neq \emptyset\})/\sim\ \cong \mathrm{N}(\{S \subseteq [i+1+j] \mid 0,i+1+j \in S\})/\sim\]
to $\mathrm{N}_{i,j}$. We thus have to produce a natural bijection, for any simplicially enriched category $\C$, between the set of maps $\J_{i,j} \rightarrow \Nc(\C)$
and the set of simplicially enriched functors $F(\mathrm{N}_{i,j}) \rightarrow \C$. 
Both sets of data decompose as disjoint unions over those maps taking the $0$-simplices/objects to $x,y \in \C$.
It is thus enough to produce, for any fixed $x,y\in \C$, a natural bijection between the set of maps $\J_{i,j} \rightarrow \Nc(\C)$ and the set of functors $F(\mathrm{N}_{i,j}) \rightarrow \C$ each sending $0,1$ to $x,y$.

Now a map $\J_{i,j} \rightarrow \Nc(\C)$ corresponds to a map $\Delta^i*\Delta^j \to \Nc(\C)$, which is constant on $\Delta^i$ and $\Delta^j$ with values $x$ and $y$, respectively. By adjunction, this corresponds to a functor $\fC[\Delta^{i+1+j}] = \fC[\Delta^{i} * \Delta^j]\to \C$ which takes the full simplicial subcategory spanned by the objects $0,\ldots,i$ to the object $x$ (and its identity morphism), and that spanned by $i+1,\ldots,i+j+1$ to $y$. We will abbreviate $\fC[\Delta^{i+1+j}]$ to $\fC$ for the remainder of this proof.

Note that a functor $\fC \rightarrow \C$ as just described is completely characterised by its restriction to $\fC(0,i+j+1) \to \C(x,y)$: On $\fC(k,l)$ it is required to be constant if $k,l\leq i$ or $k,l\geq i+1$, and compatibility with composition translates to the commutativity of the diagrams
\[
\begin{tikzcd}
\fC(l,i+j+1)\times \fC(k,l) \times \fC(0,k)\rar\dar & \fC(0,i+j+1) \dar\\
\fC(k,l) \rar & \C(x,y)
\end{tikzcd}
\]
for each $k\leq i < l$, which determine the value of the functor on $\fC(k,l)$ in terms of the lower horizontal map, since the left vertical map is surjective. What's more, any collection of maps $\fC(k,l)\to \C(x,y)$ fitting into such commutative diagrams is easily checked to determine a functor $\fC \rightarrow \C$ as desired. But essentially by definition we have a pushout diagram
\[\xymatrix{\coprod_{k\leq i < l} \fC(l,i+1+j) \times \fC(k,l) \times \fC(0,k)\ar[r]^-{\circ} \ar[d]^{\mathrm{pr}} & \fC(0,i+1+j) \ar[d]^{\mathrm{pr}}\\
             \coprod_{k\leq i < l} \fC(k,l) \ar[r]^-{\mathrm{pr}}&  \mathrm{N}_{i,j}},\]
so this data is the same as a map $\mathrm{N}_{i,j} \rightarrow \C(x,y)$. 
We leave naturality to the reader.
\end{proof}

\begin{corollary}\label{Qiscontractible}
The simplicial set $\Cb_{i,j}$ is weakly contractible for all $(i,j) \in \bbDelta^{\times 2}$.
\end{corollary}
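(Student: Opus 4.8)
The plan is to exhibit an explicit simplicial contraction of $\Cb_{i,j}$, which in particular gives weak contractibility. By Lemma~\ref{lem:homspacebisimplex} I may work with the model $\Cb_{i,j} = \mathrm N(P)/{\sim}$, where $P = \{S \subseteq [i+1+j] \mid 0,\, i+1+j \in S\}$ is ordered by inclusion and $\sim$ is the equivalence relation described there; the decisive point is that $P$ has a top element, namely $\top := [i+1+j]$. On the nerve before quotienting, ``union with $\top$'' is the standard contraction: it is $\mathrm N$ of the functor $g \colon P \times [1] \to P$ with $g(S,0) = S$ and $g(S,1) = \top$ (well defined since $S \subseteq \top$ always), and yields a simplicial homotopy $\mathrm N(P) \times \Delta^1 \to \mathrm N(P)$ from $\mathrm{id}$ to the constant map at $\top$. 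First I would record that on an $n$-simplex $(S_0 \subseteq \dots \subseteq S_n,\, \phi)$, with $\phi \colon [n] \to [1]$ monotone and $\phi^{-1}(0) = \{0,\dots,m-1\}$, it returns the chain $S_0 \subseteq \dots \subseteq S_{m-1} \subseteq \top \subseteq \dots \subseteq \top$.

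The content of the proof is then to check that this homotopy descends to the quotient, i.e.\ factors through $\Cb_{i,j} \times \Delta^1 \to \Cb_{i,j}$. Since $-\times \Delta^1$ is a left adjoint it preserves the coequaliser presenting $\pi \colon \mathrm N(P) \to \Cb_{i,j}$ as the quotient by $\sim$, so it suffices to see that $\pi \circ \mathrm N(g)$ sends any two $\sim$-related simplices $(\mathbf S,\phi)$ and $(\mathbf S',\phi)$ (same $\phi$) to the same simplex of $\Cb_{i,j}$. For this I would reuse the very pair $i_0 \in [i]$, $j_0 \in [j]$ that witnesses $\mathbf S \sim \mathbf S'$: on the levels below $m$ the two image chains agree on $[i_0,j_0]$ because $\mathbf S$ and $\mathbf S'$ do, and on the levels $\geq m$ both image chains are $\top$ there, while $\top \cap [i_0,j_0] = [i_0,j_0]$. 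The one case needing separate attention is $m = 0$, where both image chains are constant at $\top$ and one just picks any admissible pair, e.g.\ $i_0 = 0$ and $j_0 = i+1+j$. Granting the descent, the induced map $\Cb_{i,j} \times \Delta^1 \to \Cb_{i,j}$ is the identity on $\Cb_{i,j} \times \{0\}$ and constant at $\pi(\top)$ on $\Cb_{i,j} \times \{1\}$, so $\Cb_{i,j}$ is simplicially contractible, hence weakly contractible.

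I expect the only genuine obstacle to be the bookkeeping with the relation $\sim$ in the descent step — concretely, being confident that the witnessing pair $(i_0,j_0)$ may be transported unchanged and that the collapsed case $m=0$ causes no trouble. Everything else is formal: that $P$ has a maximum, that ``join with the maximum'' is the usual contraction of the nerve of a poset with a top element, and that $-\times\Delta^1$ preserves quotients by equivalence relations. (It is convenient but not necessary to note that the relation $\sim$ as stated is already transitive; in any case $\mathrm N(g)$ visibly respects it, hence respects the equivalence relation it generates, so nothing changes if one prefers to read $/{\sim}$ as quotient by the generated relation.)
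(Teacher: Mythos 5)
Your proposal is correct and takes essentially the same route as the paper: the paper's proof also contracts the cube $\mathrm{N}(\{S \subseteq [i+1+j] \mid 0, i+1+j \in S\})$ onto its terminal vertex $[i+1+j]$ and observes that this nullhomotopy descends to the quotient $\Cb_{i,j}$. You merely phrase the contraction via $\times\,\Delta^1$ instead of the cone $*\,\Delta^0$ and write out the descent check (transporting the witnessing pair $(i_0,j_0)$) that the paper leaves to the reader.
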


\begin{proof}
One readily checks that the nullhomotopy
\[\mathrm{N}(\{S \subseteq [i+1+j] \mid 0,i+1+j\in S\}) * \Delta^0 \rightarrow \mathrm{N}(\{S \subseteq [i+1+j] \mid 0,i+1+j \in S\})\]
contracting the $i+j$-cube to its terminal vertex $[i+1+j]$, factors over $\Cb_{i,j}$.
\end{proof}

We also observe that $\Cb_{i,j} \cong \Cb_{j,i}$ via the unique isomorphism $\tau \colon [i] * [j] \cong [j] * [i]$, that flips $[i]$ and $[j]$ and also reverses each internally, in formulae
\[\tau(k_l) = (i-k)_r \quad \text{and} \quad \tau(k_r) = (j-k)_l,\]
where we have used the subscripts $l$ and $r$ to indicate the join factors. One easily checks that this map descends to one between $\Cb_{i,j}$ and $\Cb_{j,i}$. As far as the bicosimplicial structure is concerned denote by $X^\mathrm{fl}$ the object obtained from $X \colon \bbDelta \times \bbDelta \rightarrow B$ by precomposing with the flip of the two simplex categories and by $X^\mathrm{lrev}$ and $X^\mathrm{rrev}$ the objects obtained by precomposing with the non-trivial automorphism of $\bbDelta$ in either the left or right factor, respectively, and write $X^\mathrm{rev}$ for the composition of these two operations (and the analogous operation on cosimplicial objects). Then one readily checks:

\begin{corollary}\label{weirdequivs}
The construction above provides an isomorphism $\Cb^\mathrm{fl} \cong \Cb^\mathrm{rev}$ of bicosimplicial objects. In particular, $\QR \cong (\QL)^\mathrm{rev}$ and $W^\mathrm{rev} \cong W$.
\end{corollary}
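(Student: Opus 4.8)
The plan is to establish the bicosimplicial isomorphism $\Cb^{\mathrm{fl}}\cong\Cb^{\mathrm{rev}}$ and then to deduce the other two assertions from it. Recall from the discussion preceding the corollary that the bijection $\tau\colon[i]*[j]\to[j]*[i]$ reverses the ambient order but preserves inclusions of subsets, interchanges the conditions $[i]\cap S\neq\emptyset$ and $[j]\cap S\neq\emptyset$, and respects the relation $\sim$ (a witness $(i_0,j_0)$ is sent to $(j-j_0,i-i_0)$, and $\tau$ restricts to a bijection of $[i_0,j_0]$ onto $[j-j_0,i-i_0]$); hence it induces isomorphisms $\overline{\tau}_{i,j}\colon\Cb_{i,j}\cong\Cb_{j,i}$ on the quotients of \ref{lem:homspacebisimplex}. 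Both the bicosimplicial transition maps of $\Cb$ and the maps $\overline{\tau}_{i,j}$ arise from maps of the pre-quotient nerves of subsets — the former from $S\mapsto(\alpha*\beta)(S)$ for $\alpha\colon[i]\to[i']$ and $\beta\colon[j]\to[j']$, the latter from $S\mapsto\tau(S)$ — so that the claim that $\overline{\tau}$ is natural as a map $\Cb^{\mathrm{fl}}\to\Cb^{\mathrm{rev}}$ reduces to the identity of ordinal maps
\[
\tau_{i',j'}\circ(\alpha*\beta)\;=\;(r\beta*r\alpha)\circ\tau_{i,j},
\]
$r$ being the non-trivial automorphism of $\bbDelta$. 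I would check this on vertices, or, more structurally, note that $\tau_{i,j}$ is the composite of the tautological identification $([i]*[j])^{\op}=[j]^{\op}*[i]^{\op}$ with the natural reversal isomorphisms $[j]^{\op}\cong[j]$ and $[i]^{\op}\cong[i]$, so that the displayed identity merely records that join on $\bbDelta$ is commutative up to the reversal automorphism.

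Granting $\Cb^{\mathrm{fl}}\cong\Cb^{\mathrm{rev}}$, for $\QR\cong(\QL)^{\mathrm{rev}}$ I would restrict this isomorphism along the inclusion $\{[0]\}\times\bbDelta\subseteq\bbDelta\times\bbDelta$; since $\QL_n\cong\Cb_{n,0}$ and $\QR_n\cong\Cb_{0,n}$, inspection of the transition maps turns the left-hand side into $\QL$ and the right-hand side into $\QR^{\mathrm{rev}}$. (Combined with the easy fact that $\Sing_{Y^{\mathrm{rev}}}(K)\cong\Sing_Y(K)^{\op}$ for a cosimplicial simplicial set $Y$ and $K\in\sSet$, this recovers the introduction's isomorphism $\HomL_{\Nc(\C)}(x,y)\cong\HomR_{\Nc(\C)}(x,y)^{\op}$; the analogous statement for middle mapping spaces follows likewise once the last assertion is proved.)

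For $\Sg^{\mathrm{rev}}\cong\Sg$ I would use that $|\cdot|_\Cb$ is computed by the coend $|X|_\Cb\cong\int^{(i,j)\in\bbDelta\times\bbDelta}X_{i,j}\cdot\Cb_{i,j}$, and write $\Phi\colon\ssSet\to\ssSet$ for the endofunctor $X\mapsto(X^{\mathrm{fl}})^{\mathrm{rev}}$ that interchanges the two simplicial directions and then reverses each. Reindexing the coend along the autoequivalence $(i,j)\mapsto(rj,ri)$ of $\bbDelta\times\bbDelta$ and invoking $(\Cb^{\mathrm{fl}})^{\mathrm{rev}}\cong(\Cb^{\mathrm{rev}})^{\mathrm{rev}}=\Cb$ yields a natural isomorphism $|\cdot|_\Cb\cong|\cdot|_\Cb\circ\Phi$ of functors $\ssSet\to\sSet$. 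Since $\Sg\cong|\cdot|_\Cb\circ\Cut$ by \ref{ex:cutjoin} and $\Sg^{\mathrm{rev}}\cong|\cdot|_\Cb\circ\Cut\circ r$, it then remains to produce an isomorphism $\Phi\circ\Cut\cong\Cut\circ r$ of functors $\bbDelta\to\ssSet$; this I would give in bidegree $(i,j)$ by $g\mapsto r_n\circ g\circ\tau_{i,j}$, where $r_n\colon[n]\to[n]$ is the reversal $k\mapsto n-k$, which turns an order-preserving map $[j]*[i]\to[n]$ into an order-preserving map $[i]*[j]\to[n]$; its naturality in $(i,j)$ is precisely the displayed identity above, and naturality in $n$ is immediate since $r_n$ is an involution. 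Composing, $\Sg\cong|\cdot|_\Cb\circ\Cut\cong|\cdot|_\Cb\circ\Phi\circ\Cut\cong|\cdot|_\Cb\circ\Cut\circ r\cong\Sg^{\mathrm{rev}}$.

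None of this is deep; the real work is purely organisational — keeping accurate track of the various order reversals, and in particular confirming that interchanging the two join factors is compensated by reversing \emph{both} of them rather than just one, which is the content both of the identity $\tau_{i',j'}\circ(\alpha*\beta)=(r\beta*r\alpha)\circ\tau_{i,j}$ and of the coend reindexing used for $\Sg$. The most bookkeeping-intensive single step is the isomorphism $\Phi\circ\Cut\cong\Cut\circ r$ together with its naturality in $n$, but it is entirely routine.
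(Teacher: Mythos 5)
Your proposal is correct and follows the paper's own route: the first isomorphism is verified by the same direct check that $\tau$ respects the defining conditions, the equivalence relation, and the cosimplicial structure maps, and the statements about $\QL$, $\QR$ and $\Sg$ are deduced exactly as in the paper by reindexing the coend for $|\cdot|_\Cb$ together with the identities $(\Cut^n)^{\mathrm{fl,op}}=\Cut^{rn}$ and the analogous one for $\Delta^n\boxtimes\Delta^0$. You merely spell out the details (the witness bookkeeping for $\sim$, the identity $\tau_{i',j'}\circ(\alpha*\beta)=(r\beta*r\alpha)\circ\tau_{i,j}$, and the explicit map $g\mapsto r_n\circ g\circ\tau_{i,j}$) that the paper leaves as ``a simple check''.
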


Let us stress again, that the superscript $\mathrm{rev}$ denotes the reversal of the cosimplicial direction, not the simplicial one, for which we will use the superscript $\mathrm{op}$.

\begin{proof}
The first part is a simple check and the second part follows formally from the coend formula for realisations together with the observations that 
\begin{align*}
\big(n \mapsto (\Delta^n \boxtimes \Delta^0)^{\mathrm{fl, op}}\big) &= \big(n \mapsto \Delta^0 \boxtimes (\Delta^n)^\mathrm{op}\big) \\
&= \big(n \mapsto \Delta^0 \boxtimes \Delta^n\big)^{\mathrm{rev}}
\end{align*}
and similarly
\[\big(n \mapsto (\Cut^n)^{\mathrm{fl, op}}\big) = \big(n \mapsto \Cut^n\big)^\mathrm{rev},\]
both of which are consequences of
\[\big(n \mapsto (\Delta^n)^\mathrm{op}\big) = \big(n \mapsto \Delta^n\big)^\mathrm{rev}.\]
\end{proof}

Now for $A \colon \bbDelta \rightarrow \sSet$ and $X \in \sSet$ one generally has
\[\Sing_{A^\mathrm{rev}}(X) = \Sing_A(X)^\mathrm{op}\]
naturally in both variables, so we find:

\begin{corollary}\label{weirdequivsII}
The construction above induces isomorphisms
\[\HomR_{\Nc(\C)}(x,y) \cong \HomL_{\Nc(\C)}(x,y)^\mathrm{op} \quad \text{and} \quad \Hom_{\Nc(\C)}(x,y) \cong \Hom_{\Nc(\C)}(x,y)^\mathrm{op}.\]
\end{corollary}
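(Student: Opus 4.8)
The plan is to deduce this formally from the description of the three mapping spaces as $\Sing$-constructions recorded in Section~2, the comparison of cosimplicial objects in Corollary~\ref{weirdequivs}, and the elementary interaction of $\Sing_A$ with reversal of the cosimplicial direction. First I would recall the tautological isomorphisms $\HomL_{\Nc(\C)}(x,y) \cong \Sing_{\QL}(\C(x,y))$, $\HomR_{\Nc(\C)}(x,y) \cong \Sing_{\QR}(\C(x,y))$ and $\Hom_{\Nc(\C)}(x,y) \cong \Sing_{\Sg}(\C(x,y))$, all natural in the Kan enriched category $\C$.

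Next I would invoke Corollary~\ref{weirdequivs}, which (via the flip $\tau$) supplies isomorphisms $\QR \cong (\QL)^\mathrm{rev}$ and $\Sg \cong \Sg^\mathrm{rev}$ of cosimplicial objects in $\sSet$. Combined with the identity $\Sing_{A^\mathrm{rev}}(X) = \Sing_A(X)^\op$, valid for every $A \colon \bbDelta \to \sSet$ and $X \in \sSet$ and natural in both variables --- which holds because $\Sing_A(X)$ has $n$-simplices $\Hom_\sSet(A_n, X)$ and precomposing $A$ with the order-reversing self-equivalence of $\bbDelta$ leaves objects fixed while twisting the simplicial structure maps by $\op$ --- this gives
\[\HomR_{\Nc(\C)}(x,y) \cong \Sing_{\QR}(\C(x,y)) \cong \Sing_{(\QL)^\mathrm{rev}}(\C(x,y)) = \Sing_{\QL}(\C(x,y))^\op \cong \HomL_{\Nc(\C)}(x,y)^\op,\]
and running the same chain with $\Sg$ in place of $\QL$ and $\QR$ throughout yields $\Hom_{\Nc(\C)}(x,y) \cong \Hom_{\Nc(\C)}(x,y)^\op$.

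I do not expect a genuine obstacle, since the combinatorial content has already been isolated in Corollary~\ref{weirdequivs}. The only points needing care are to keep track of the naturality of $\Sing_{A^\mathrm{rev}}(X) \cong \Sing_A(X)^\op$ in both arguments, so that the resulting isomorphisms of Kan complexes are natural in $\C$, and to remember that $\mathrm{rev}$ reverses the cosimplicial direction whereas $\op$ reverses the simplicial one; conflating the two operations would turn the statement into a false one.
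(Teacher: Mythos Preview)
Your proposal is correct and follows the paper's approach exactly: the paper derives the corollary in one line from Corollary~\ref{weirdequivs} together with the identity $\Sing_{A^\mathrm{rev}}(X) = \Sing_A(X)^\mathrm{op}$ (stated just before the corollary), and you reproduce this chain with a bit more detail. Nothing further is needed.
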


In particular, the middle mapping space in a coherent nerve really is symmetric. We also invite the reader to check that the diagram
\[\xymatrix{\HomR_{\Nc(\C)}(x,y) \ar@{<->}[r] \ar[d] & \HomL_{\Nc(\C)}(x,y)^\mathrm{op} \ar[d] \\
\Hom_{\Nc(\C)}(x,y) \ar@{<->}[r] & \Hom_{\Nc(\C)}(x,y)^\mathrm{op},}\]
where the vertical maps are the canonical inclusions of the one-sided into the middle morphism complexes, commutes. \\

\section{Proof of the main result}

We start out by producing the comparison map $\sigma \colon \Sg \rightarrow \Delta$, which in fact factors as $\Sg \rightarrow \QL \rightarrow \Delta$.\footnote{We think of the map $\Sg \rightarrow \QL$ as squeezing the Wurst onto its Zipfel.} To this end just observe that a natural transformation $|\Cut^n|_\Cb = \Sg_n \rightarrow \Delta^n$ corresponds to a natural map $\Cut^n \rightarrow \Sing_\Cb(\Delta^n)$ of bisimplicial sets. But in bidegree $(i,j)$ these are given by
\[\Hom_\sSet(\Delta^i * \Delta^j, \Delta^n) \longrightarrow \Hom_\sSet(\Cb_{i,j},\Delta^n)\]
so it suffices to give maps $\Cb_{i,j} \rightarrow \Delta^i * \Delta^j$ natural in $(i,j) \in \Delta^{\times 2}$. Since the target is a full simplex, such maps can be specified on $0$-simplices, where we declare that $S \subseteq [i]*[j]$ should go to $\mathrm{max}(S \cap [i])$, considered as a $0$-simplex of $\Delta^i * \Delta^j$. The check that this is really well-defined and natural we leave to the reader, but let us note, that it takes values in $\Delta^i \subseteq \Delta^i * \Delta^j$, and therefore factors over $\QL$, compare \ref{ex:cutjoin} (3).

Composing with the equivalences from \ref{weirdequivs} yields an analogous map $\Sg \rightarrow \QR \rightarrow \Delta^\mathrm{rev} = \Delta^\mathrm{op}$ that takes $S \subseteq [i] * [j]$ to $\mathrm{min}(S \cap [j])$, but we shall not make use of it.

To obtain our main result we now want to apply Reedy's lemma (see \ref{reedy} below) to the transformation $\sigma \colon \Sg \rightarrow \Delta$ just constructed. We are thus required to check that $\Sg_n$ is weakly contractible for each $n \in \bbDelta$ and that $\Sg$ is Reedy cofibrant, i.e. that $|\partial \Delta^n|_\Sg \rightarrow |\Delta^n|_\Sg = \Sg_n$ is always a cofibration. While both statements can be checked directly, we will deduce them by further application of Reedy's lemma to the bicosimplicial simplicial set $\Cb$ using the equivalences $\Sg_n = |\Cut^n|_\Cb$.

Let us first state Reedy's lemma, which we will need for both simplicial and bisimplicial sets (it generally holds over any Reedy category, which both $\bbDelta$ and $\bbDelta^{\times 2}$ are examples of). To give a uniform statement, let us agree that by a simplex we shall mean one of the $\Delta^i$ in the case of $\sSet$ and one of the $\Delta^i \boxtimes \Delta^j$ in the case of $\ssSet$. Similarly, by the boundary of a simplex we shall mean either $\partial \Delta^i$ or 
\[\partial(\Delta^i \boxtimes \Delta^j) = (\partial(\Delta^{i-1}) \boxtimes \Delta^{j}) \cup (\Delta^i \boxtimes \partial\Delta^{j-1}),\]
as appropriate. A (bi)cosimplicial simplicial set $X$ is called Reedy cofibrant if $|\cdot|_X$ takes these boundary inclusions to cofibrations (i.e. degreewise injections). Here is the version of Reedy's result we shall need:

\begin{proposition}[Reedy's lemma]\label{reedy}
Let $X$ be a Reedy cofibrant (bi)cosimplicial simplicial set, whose structure maps are weak homotopy equivalences. Then $|\cdot|_X$ preserves degreewise injections and weak homotopy equivalences, and in the cosimplicial case $\Sing_X$ preserves homotopy equivalences between Kan complexes. 

If furthermore, $\eta \colon X \rightarrow Y$ is a pointwise weak equivalence between two such (bi)cosimplicial simplicial sets, then 
\[\eta_* \colon |S|_X \rightarrow |S|_Y\]
is a weak homotopy equivalence for every (bi)simplicial set $S$, and in the cosimplicial case for any Kan complex $T$ the map
\[\eta^* \colon \Sing_Y(T) \longrightarrow \Sing_X(T)\]
is a homotopy equivalence of Kan complexes.
\end{proposition}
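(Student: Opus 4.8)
The plan is to prove all assertions by induction over the (bi)simplicial skeleta, using the standard latching-object machinery that is the content of Reedy's lemma. Writing $L_n X$ for the $n$-th latching object of the (bi)cosimplicial simplicial set $X$, Reedy cofibrancy says exactly that $L_n X \to X_n$ is a cofibration, and the formula $|\cdot|_X(\partial\Delta^n) = L_n X$ (respectively for $\partial(\Delta^i\boxtimes\Delta^j)$) is the identification that lets us phrase everything in terms of the realization functor. The first step is to observe that any (bi)simplicial set $S$ is canonically the colimit of its simplices over its (possibly bi-indexed) category of simplices, and that this colimit can be computed by attaching non-degenerate simplices along their boundaries one skeleton at a time; each attachment is a pushout along a boundary inclusion $\partial\Delta^n \hookrightarrow \Delta^n$. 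Applying the colimit-preserving functor $|\cdot|_X$ turns this into a sequential colimit of pushouts along the maps $L_n X \to X_n$.

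Given this, the first conclusion — that $|\cdot|_X$ preserves degreewise injections — follows because cofibrations of simplicial sets are closed under pushout, transfinite composition and retract, and any degreewise injection $S' \hookrightarrow S$ can be built by such attachments relative to $S'$. For the second conclusion I would argue as follows: Reedy cofibrancy plus the hypothesis that the structure maps are weak equivalences implies, by the usual inductive argument on $n$, that the maps $\mathrm{sk}_{n-1}|S|_X \to \mathrm{sk}_n|S|_X$ are obtained as pushouts of $L_n X \otimes \Delta^n \cup_{L_n X \otimes \partial\Delta^n} X_n \otimes \partial\Delta^n \to X_n\otimes\Delta^n$-type trivial cofibrations (here one uses that $L_nX\to X_n$ is a cofibration between weakly contractible simplicial sets, hence a trivial cofibration, and that cofibrations smashed with boundary inclusions of simplices stay trivial cofibrations — this is the one place where we invoke the non-trivial fact that acyclic cofibrations are anodyne, or alternatively work in the Joyal model structure). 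Hence $|S|_X$ is weakly equivalent to its $0$-skeleton $|S|_{X_0}$, and a weak equivalence $S \xrightarrow{\sim} S'$ induces a weak equivalence on $0$-skeleta since $X_0$ is weakly contractible, and then on all skeleta by a compatible induction using the gluing lemma for weak equivalences along cofibrations. The statement for $\Sing_X$ between Kan complexes is the adjoint/dual: for Kan $T$ and $T'$, a homotopy equivalence $T\to T'$ induces a map $\Sing_X(T)\to\Sing_X(T')$ which one checks is a homotopy equivalence by exhibiting an inverse and homotopies, using that $|\cdot|_X$ carries $\Delta^1$-homotopies of maps $S\to S$ to $\Delta^1$-homotopies (since $|\Delta^1\boxtimes S|_X \cong \Delta^1 \times |S|_X$ when $X$ is a cosimplicial object and similarly in the bisimplicial case via the external product) — equivalently, use that $\Sing_X$ is right Quillen into an appropriate model structure, so preserves weak equivalences between fibrant objects.

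For the relative statements about a pointwise weak equivalence $\eta\colon X\to Y$: the map $\eta_*\colon |S|_X \to |S|_Y$ is compatible with the skeletal filtrations on both sides, so by induction it suffices to show it is a weak equivalence after each attachment; at stage $n$ this reduces, via the gluing lemma, to knowing that $\eta_n\colon X_n\to Y_n$ and $L_n(\eta)\colon L_nX\to L_nY$ are weak equivalences. The former is the hypothesis; the latter follows because $L_nX$ and $L_nY$ are weakly contractible (being built from the contractible $X_m$, $Y_m$ with $m<n$ along cofibrations, using Reedy cofibrancy and the first conclusion of the proposition already established for $X$ and $Y$). Finally $\eta^*\colon \Sing_Y(T)\to\Sing_X(T)$ for Kan $T$: in bidegree/degree $n$ this is $\Hom(Y_n,T)\to\Hom(X_n,T)$, i.e. precomposition with the weak equivalence $X_n\to Y_n$ between cofibrant objects, mapping into the Kan complex $T$; such maps are homotopy equivalences of Kan complexes, and one checks these assemble to a levelwise-homotopy-equivalence which is automatically a homotopy equivalence of the whole simplicial sets since everything in sight is Kan (again using that middle mapping spaces, or the relevant $\Sing_X(T)$, are Kan — the second of the two permitted nontrivial inputs).

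The main obstacle is the inductive step showing the skeletal inclusions $\mathrm{sk}_{n-1}|S|_X \to \mathrm{sk}_n|S|_X$ are trivial cofibrations: this requires both the pushout-product property for cofibrations and acyclic cofibrations of simplicial sets (equivalently the identification of acyclic cofibrations with anodyne maps) and the bookkeeping that the relevant latching maps of $X$ are themselves acyclic cofibrations. Everything else is a diagram chase with the gluing lemma and adjunction, which we leave to the reader.
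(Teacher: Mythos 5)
Your outline correctly isolates the two easy parts (preservation of cofibrations, and the gluing-lemma skeletal induction for $\eta_*$), but the key step is broken. You repeatedly assert that the latching map $L_nX\to X_n$ is a trivial cofibration because $L_nX$ is weakly contractible. It is not: $L_nX=|\partial\Delta^n|_X$, and already for $X=\Delta$ (which satisfies all hypotheses of the proposition) this is $\partial\Delta^n\simeq S^{n-1}$. Consequently the skeletal inclusions $\mathrm{sk}_{n-1}|S|_X\to\mathrm{sk}_n|S|_X$ are not trivial cofibrations, and $|S|_X$ is certainly not equivalent to its $0$-skeleton --- for $X=\Delta$ that would say every simplicial set is weakly equivalent to a discrete set. (Also, since $S_n$ is a mere set, the attaching maps are just coproducts of copies of $L_nX\to X_n$; no pushout-product with $\partial\Delta^n\to\Delta^n$ appears.) So your derivation of ``$|\cdot|_X$ preserves weak equivalences'' collapses, and with it the subsequent claims. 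The same confusion infects your treatment of $\eta_*$: there the map $L_nX\to L_nY$ is a weak equivalence not because latching objects are contractible, but by the inductive hypothesis applied to the $(n-1)$-dimensional object $\partial\Delta^n$. The paper's route is different and is where the hypothesis on the structure maps actually enters: one first proves the relative statement for $\eta_*$ (which needs only Reedy cofibrancy and the gluing lemma), and then applies it to the zig-zag of pointwise equivalences of Reedy cofibrant objects $\operatorname{const}X_0\times\Delta\to X\times\Delta\to X$; since $S\mapsto|S|_{\operatorname{const}X_0\times\Delta}=X_0\times S$ visibly preserves weak equivalences, so does $|\cdot|_X$.

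Two further steps do not hold up. The isomorphism $|\Delta^1\times S|_X\cong\Delta^1\times|S|_X$ is false in general ($|\cdot|_X$ preserves colimits, not products; compute $|\Delta^1\times\Delta^1|_X$). What is true, once $|\cdot|_X$ is known to preserve cofibrations and weak equivalences, is that $|S\times\Delta^1|_X$ is a cylinder object for $|S|_X$, so the adjunction bijection is compatible with the homotopy relation; combined with the fact that $\Sing_X(T)$ is Kan (this is where ``trivial cofibrations are anodyne'' must be invested, or one of the substitutes discussed in the paper), this gives that $\Sing_X$ preserves homotopy equivalences between Kan complexes. Finally, your argument for $\eta^*$ does not parse: in degree $n$ the map $\Hom(Y_n,T)\to\Hom(X_n,T)$ is a map of \emph{sets} of simplices, not of mapping spaces, and it is not a bijection; ``levelwise homotopy equivalence'' has no content here. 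The correct argument compares $[S,\Sing_Y(T)]\cong[|S|_Y,T]\to[|S|_X,T]\cong[S,\Sing_X(T)]$, which is bijective for every $S$ by the absolute statements already proved, and concludes by the Yoneda lemma in the homotopy category of Kan complexes.
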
 

\begin{proof}[Proof sketch:]
The statement about $\eta_*$ is a simple skeletal induction using the glueing lemma, and that $|\cdot|_X$ preserves cofibrations is similarly easy (and both statements only use Reedy cofibrancy of $X$ and $Y$). The second part implies that $\Sing_X$ preserves trivial fibrations. Next, one shows that $|\cdot|_X$ preserves weak homotopy equivalences, for example by considering the composite
\[\operatorname{const} X_0 \times \Delta \longrightarrow X \times \Delta \xrightarrow{\mathrm{pr}} X,\]
which gives the claim by an application of the first part, since 
\[S \longmapsto |S|_{\operatorname{const} X_0 \times \Delta} = X_0 \times S\] evidently preserves weak homotopy equivalences. Investing the fact that cofibrations that are also weak equivalences are in fact anodyne, then shows that $|\cdot|_X$ also preserves anodynes and it follows that $\Sing_X$ preserves Kan fibrations, so in particular $\Sing_X(T)$ is Kan again. Using this knowledge one easily checks that the adjunction isomorphism
\[\Hom_\sSet(|S|_X,T) \cong \Hom_\sSet(S,\Sing_X(T))\]
is compatible with the homotopy relation. This immediately implies that $\Sing_X$ preserves homotopy equivalences between Kan complexes. Finally, the diagram
\[\xymatrix{[S,\Sing_Y(T)] \ar@{<->}[d] \ar[r]^{\eta^*} & [S,\Sing_X(T)] \ar@{<->}[d] \\
            [|S|_Y,T] \ar[r]^{\eta_*} & [|S|_X,T]}\]
commutes for every $S \in \sSet$ and by the previous parts has three maps bijective, hence also the fourth.
\end{proof}

\begin{remark} 
\leavevmode
\begin{enumerate}
\item The only step in this result requiring non-trivial input (namely that trivial cofibrations are anodyne) is that $\Sing_X(T)$ and $\Sing_Y(T)$ are indeed Kan complexes (which, as far as we know, is a necessary step for proving that they are even weakly equivalent). In the case we want to apply the second part of the lemma, i.e. for $X = \Sg$, $Y = \Delta$ and $T = \C(x,y)$ for some Kan enriched category $\C$, this can be avoided: It is trivial for $\Sing_Y(T)$ and follows for $\Sing_X(T)$, since by design $\Sing_\Sg(\C(x,y)) = \Hom_{\Nc(\C)}(x,y)$ and morphism complexes in quasi-categories are Kan as a straight forward consequence of Joyal's lifting theorem \cite{Jo}.
\item Alternatively, by inspection of the proof above, one only needs to know that $|\cdot|_\Sg$ preserves anodynes to conclude that $\Hom_{\Nc(\C)}(x,y)$ is Kan without investing either result. A saturation argument reduces this to the case of horn inclusions, and here an explicit verification should be possible, but we shall refrain from attempting it here.
\item The statements about $\Sing_X$ are restricted to the cosimplicial case (as opposed to the bicosimplicial one) to avoid a discussion of various possible choices of fibrations and cofibrations in $\ssSet$, as it is not required for our applications. By \cite[Section IV.3.2]{GJ} the argument above works equally well for example upon replacing the words `anodyne' and `Kan complex' by `trivial Reedy cofibration' and `Reedy fibrant bisimplicial set', respectively.
\end{enumerate}
\end{remark}

We will now set out to show that $\Cb$ satisfies the assumptions of Reedy's lemma. We already showed in \ref{Qiscontractible} that $\Cb$ is termwise contractible, so it remains to check Reedy cofibrancy. 

We will employ the following criterion:
\begin{lemma}
Let $X$ by a (bi)cosimplicial simplicial set, such that for any pair of codimension $1$ faces $K,K'$ of a simplex $L$ the diagram
\[
\begin{tikzcd}
{|}K\cap K'{|}_X \rar\dar & {|}K{|}_X \dar\\
{|}K'{|}_X \rar & {|}L{|}_X
\end{tikzcd}
\]
is cartesian. Then $X$ is Reedy cofibrant.
\end{lemma}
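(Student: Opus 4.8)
The plan is to prove, by induction on the dimension of the (bi)simplex $L$ (setting $\dim(\Delta^i\boxtimes\Delta^j):=i+j$), the following strengthening of Reedy cofibrancy: for every sub-presheaf $Y\subseteq L$ the map $|Y|_X\to|L|_X$ is a monomorphism, and the resulting order-embedding of $\mathrm{Sub}(L)$ into $\mathrm{Sub}(|L|_X)$ preserves finite intersections. Applied with $Y=\partial L$, for every $L$, this is precisely Reedy cofibrancy in the sense defined above. Three standing facts will be used freely: $|\cdot|_X$ is a left adjoint, so it preserves colimits and hence carries the square exhibiting a union $A\cup B$ of sub-presheaves as the pushout $A\amalg_{A\cap B}B$ to a pushout; in the presheaf categories $\sSet$ and $\ssSet$ intersections, unions and pushouts of sub-presheaves are computed levelwise, so subobject lattices are distributive and the pushout of a span of two monomorphisms of sub-presheaves has monomorphic legs, is simultaneously a pullback, and realises the union with its intersection; and every proper sub-presheaf of $L$ is contained in $\partial L$.

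No hypothesis on $X$ is needed to see that each codimension-$1$ face inclusion $K\hookrightarrow L$ is a split monomorphism (the coface $\delta^k$ is split by the codegeneracy $\sigma^k$, and likewise with an identity factor in the bisimplicial case), so that $|K|_X\to|L|_X$, being the image of a split monomorphism under the functor $|\cdot|_X$, is again a monomorphism. Combined with the inductive hypothesis applied to $K$ (of dimension $\dim L-1$), this shows that for every sub-presheaf $Z\subseteq K$ of a codimension-$1$ face $K$ of $L$ the composite $|Z|_X\to|K|_X\to|L|_X$ is a monomorphism. The base case $\dim L=0$ is trivial, as then $\mathrm{Sub}(L)=\{\emptyset,L\}$.

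For the inductive step, fix $L$ of dimension $d\geq1$ with codimension-$1$ faces $K_1,\dots,K_m$ and a sub-presheaf $Y$, which we may assume lies in $\partial L$. Put $Y^{(r)}=(Y\cap K_1)\cup\dots\cup(Y\cap K_r)$, so that $Y^{(0)}=\emptyset$, $Y^{(m)}=Y$ and $Y^{(r)}=Y^{(r-1)}\cup(Y\cap K_r)$; we prove by an inner induction on $r$ that $|Y^{(r)}|_X\to|L|_X$ is a monomorphism. Applying $|\cdot|_X$ to this union yields the pushout square
\[
\begin{tikzcd}
|Y^{(r-1)}\cap(Y\cap K_r)|_X \rar\dar & |Y\cap K_r|_X \dar\\
|Y^{(r-1)}|_X \rar & |Y^{(r)}|_X
\end{tikzcd}
\]
in which both maps out of the upper-left corner are monomorphisms (by the previous paragraph and the inner hypothesis), and in which $|Y^{(r-1)}|_X$ and $|Y\cap K_r|_X$ are subobjects of $|L|_X$; hence $|Y^{(r)}|_X\to|L|_X$ is a monomorphism as soon as $|Y^{(r-1)}|_X\cap|Y\cap K_r|_X=|Y^{(r-1)}\cap(Y\cap K_r)|_X$ in $\mathrm{Sub}(|L|_X)$. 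Using $|Y^{(r-1)}|_X=\bigcup_{s<r}|Y\cap K_s|_X$ (inner hypothesis) and $|Y^{(r-1)}\cap(Y\cap K_r)|_X=\bigcup_{s<r}|Y\cap K_s\cap K_r|_X$, distributivity of $\mathrm{Sub}(|L|_X)$ reduces this to the identities
\[|Y\cap K_s|_X\cap|Y\cap K_r|_X=|Y\cap K_s\cap K_r|_X\qquad(s<r).\]
This is the one point where the hypothesis enters: the cartesian square for the pair $(K_s,K_r)$ gives $|K_s|_X\cap|K_r|_X=|K_s\cap K_r|_X$, which confines the left-hand side inside $|K_s\cap K_r|_X$, whereupon applying intersection-preservation from the dimension induction -- inside $|K_s|_X$ and $|K_r|_X$, then inside $|K_s\cap K_r|_X$ -- yields the identity. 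An entirely parallel computation, decomposing an arbitrary intersection $|Y'|_X\cap|Y''|_X$ along the faces $K_s$ and invoking the hypothesis precisely where two distinct faces meet, proves intersection-preservation for $L$ and closes the induction.

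The main obstacle is exactly this bookkeeping: both halves of the strengthened statement must be transported through the induction in tandem, and the only non-formal ingredient, the cartesian hypothesis, has to be inserted at the intersections of distinct codimension-$1$ faces; everything else is forced by distributivity of subobject lattices in presheaf categories, the good behaviour there of pushouts along monomorphisms, and the splitting of face inclusions of (bi)simplices.
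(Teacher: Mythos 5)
Your proof is correct, but it takes a genuinely different route from the paper's. You run a double induction on the dimension of the simplex, carrying the strengthened statement that $|Y|_X\to|L|_X$ is a monomorphism for \emph{every} subpresheaf $Y\subseteq L$ and that these subobjects of $|L|_X$ intersect (and unite) correctly; the boundary is then assembled face by face through pushouts along monomorphisms, with distributivity of subobject lattices reducing everything to the single identity $|K_s|_X\cap|K_r|_X=|K_s\cap K_r|_X$, which is exactly where the cartesian hypothesis enters. The paper instead argues in one step, with no induction: $\partial L$ is the image of $F=\coprod_K K\to L$, hence the coequalizer of its kernel pair $F\times_L F\rightrightarrows F$; applying $|\cdot|_X$ and using the hypothesis to identify $|F\times_L F|_X$ with $|F|_X\times_{|L|_X}|F|_X$, one finds that $|\partial L|_X\to|L|_X$ is again the second half of an image factorisation, hence a levelwise injection. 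The paper's argument is shorter and bookkeeping-free, at the price of invoking the regular-category fact that images are coequalizers of kernel pairs; yours is essentially the ``hands-on argument'' the paper alludes to but omits, is entirely elementary (levelwise set theory plus preservation of colimits and epimorphisms by the left adjoint $|\cdot|_X$), and yields slightly more information, namely that $|\cdot|_X$ embeds the full subobject lattice of each simplex into that of its realisation, preserving finite intersections.
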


The point of the lemma is that all four terms in the square are actually simplices of varying dimensions (before realisation) and therefore the diagram itself is given by evaluating $X$ on a diagram, with no colimits occuring in the computation of the four corners. 

Note also, that the case $K=K'$ in the condition above boils down to precisely the statement that the boundary maps of $X$ are cofibrations (which is automatic as the degeneracies give left inverses).

\begin{proof}
It is easy to give a hands-on argument; here is an abstract one:

The bisimplicial set $\partial L \subseteq L$ is the levelwise image of the tautological map $F = \coprod_{K} K\to L$, where the index runs through all codimension $1$-faces of $L$. In the category of sets, images of an arbitrary morphism $X\to Y$ can be computed by $X\to \Coeq(X\times_Y X\tworightarrows X)\to Y$, in the sense that the composition is the original map, the first map surjective, and the second one injective. Consequentially, this also holds in any $\Set$-valued functor category, like $\sSet$. It follows that $\partial L \rightarrow L$ agrees with the canonical map $\Coeq(F\times_{L} F \tworightarrows F) \rightarrow L$. We can therefore compute
\begin{align*}
|\partial L|_X &= \Coeq(|F\times_{L} F|_X \tworightarrows |F|_X) \\
               &= \Coeq(\coprod_{K,K'}|K\times_{L} K'|_X \tworightarrows |F|_X) \\
               &= \Coeq(\coprod_{K,K'}|K|_X\times_{|L|_X} |K'|_X \tworightarrows |F|_X) \\
               &= \Coeq(|F|_X\times_{|L|_X} |F|_X \tworightarrows |F|_X),
\end{align*}
where we have used that pullbacks commute with colimits in the second and last steps, and the assumption in the third one. By the considerations above, this last expression agrees with the image of $|\partial L|_X$ in $|L|_X$, which forces the induced map to be injective in each degree.
\end{proof}

\begin{corollary}\label{Qcof}
The bicosimplicial simplicial set $\Cb$ is Reedy cofibrant.
\end{corollary}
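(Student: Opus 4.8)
The plan is to verify the hypothesis of the criterion just established. So fix a bisimplex $L=\Delta^i\boxtimes\Delta^j$ and a pair $K,K'$ of codimension-$1$ faces of it; one must check that applying $|\cdot|_\Cb$ to $K\cap K'\to K$ and $K'\to L$ yields a cartesian square of simplicial sets. Since $K\cap K'$, $K$, $K'$ and $L$ are again bisimplices and $|\Delta^a\boxtimes\Delta^b|_\Cb=\Cb_{a,b}$, this square is just $\Cb$ evaluated on a commuting square of cofaces in $\bbDelta\times\bbDelta$; its corners are $\Cb_{i-2,j},\Cb_{i-1,j},\Cb_{i-1,j},\Cb_{i,j}$ if both faces lie in the $\Delta^i$-factor, they are $\Cb_{i-1,j-1},\Cb_{i-1,j},\Cb_{i,j-1},\Cb_{i,j}$ if one lies in each factor, and in the remaining case they are the mirror image of the first (exchanging $i$ and $j$). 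That last case follows from the first by the symmetry $\Cb^{\mathrm{fl}}\cong\Cb^{\mathrm{rev}}$ of \ref{weirdequivs}, or simply by the evident symmetry of the argument below, so two cases remain. In each, every arrow is a monomorphism --- the cofaces are monic because the codegeneracies split them, which is exactly the $K=K'$ instance of the hypothesis --- so the four corners become subcomplexes of $\Cb_{i,j}$, and cartesianness says precisely that, inside $\Cb_{i,j}$, the intersection of the images of the two corners adjacent to $\Cb_{i,j}$ equals the image of the corner opposite it.

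To identify those images I would work with the model $\Cb_{i,j}=\mathrm{N}(P_{i,j})/{\sim}$ of \ref{lem:homspacebisimplex}, where $P_{i,j}=\{S\subseteq[i+1+j] : S\cap[i]\neq\emptyset\neq S\cap[j]\}$. The coface omitting a vertex $k\in[i]$ identifies $P_{i-1,j}$ with $\{S\in P_{i,j} : k\notin S\}$, and one sees at once that an $m$-simplex of $\Cb_{i,j}$ --- that is, a $\sim$-class of chains $S_0\subseteq\dots\subseteq S_m$ in $P_{i,j}$ --- lies in the image of $\Cb_{i-1,j}$ if and only if the class admits a representative with $k\notin S_r$ for every $r$. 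Likewise the image of $\Cb_{i,j-1}$ consists of the classes with a representative avoiding the vertex $l\in[j]$ omitted there, and the image of the opposite corner of the classes admitting a representative avoiding both omitted vertices. Hence in both cases cartesianness reduces to the following \emph{merging} statement: a $\sim$-class that has one representative avoiding a vertex $v$ and a (possibly different) representative avoiding a vertex $w$ in fact has a single representative avoiding both $v$ and $w$.

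The one point with genuine combinatorial content is that $\sim$ needs no zig-zags: the relation ``$S_\bullet$ and $S'_\bullet$ admit a common anchor'' --- an interval $[i_0,j_0]$ with $i_0\in S_0\cap S'_0\cap[i]$, $j_0\in S_0\cap S'_0\cap[j]$ and $S_r\cap[i_0,j_0]=S'_r\cap[i_0,j_0]$ for all $r$ --- is already transitive, since if $[i_0,j_0]$ is a common anchor for $(S_\bullet,S'_\bullet)$ and $[i_0',j_0']$ one for $(S'_\bullet,S''_\bullet)$, then a direct check shows $[\max(i_0,i_0'),\min(j_0,j_0')]$ is a common anchor for $(S_\bullet,S''_\bullet)$; hence the anchor relation coincides with $\sim$. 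Granting this, merging is immediate: given $S_\bullet\sim S'_\bullet$ with $S_\bullet$ avoiding $v$ and $S'_\bullet$ avoiding $w$, pick a common anchor $[i_0,j_0]$ and put $T_r:=S_r\cap[i_0,j_0]=S'_r\cap[i_0,j_0]$; then $T_\bullet$ is again a chain in $P_{i,j}$ (it contains $i_0$ and $j_0$ at each stage), it represents the same class (with anchor $[i_0,j_0]$), and it avoids $v$ because $v\notin S_r$ and avoids $w$ because $w\notin S'_r$ --- equally well when $v,w$ are two distinct vertices of $[i]$ (the first case) and when one is in each factor (the second case). That the two composites from the opposite corner into $\Cb_{i,j}$ agree and that all squares commute is immediate from bifunctoriality of $\Cb$ together with the cosimplicial identities, so nothing further is needed.

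I expect this transitivity of the anchor relation --- equivalently, the assertion that the collapsing in $\Cb_{i,j}$ only ever alters a chain outside its ``window'' $[\max(S_0\cap[i]),\min(S_0\cap[j])]$, as suggested by the collapsed regions in the figure --- to be the sole step that is not pure bookkeeping with the model of \ref{lem:homspacebisimplex}.
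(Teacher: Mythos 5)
Your argument is correct, and it follows the paper's overall strategy: apply the pullback criterion, note that all four corners are bisimplices, and then verify the intersection-of-images statement in the explicit model of \ref{lem:homspacebisimplex}. Where you diverge is in how the combinatorial core is organized. The paper computes, via the projection $\pi\colon \mathrm{N}(\{S\subseteq[i+1+j]\mid\dots\})\to \Cb_{i,j}$, explicit disjunctive conditions characterizing the preimages of each face and of their intersection, and observes that the lists coincide. You instead isolate two structural facts: first, that the generating ``anchor'' relation is already transitive (your $[\max(i_0,i_0'),\min(j_0,j_0')]$ verification is correct, and together with the evident reflexivity and symmetry this shows $\sim$ needs no zig-zags); second, that intersecting a chain with a common anchor window produces a canonical representative, which immediately yields the merging statement. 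These are essentially the facts hidden in the paper's ``one easily checks,'' so the content is the same, but your formulation makes the key point --- that the collapsing never disturbs a chain inside its window --- explicit and reusable, at the cost of having to argue separately that the cofaces are monomorphisms and that cartesianness for squares of monomorphisms reduces to an equality of images (both of which you handle correctly, the former via the splitting by codegeneracies). One small point worth spelling out if you write this up: in the case of two faces in the same factor, you need $i_0\neq w$ as well as $i_0\neq v$ for the windowed chain to land in the correct sub-poset; this follows since $i_0$ lies in both $S_0$ and $S_0'$, exactly as you use for $v$.
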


This statement is the only piece of combinatorics we did not manage to avoid; alas the work has to be somewhere.

\begin{proof}
Recall then that 
\[\Cb_{i,j} = \mathrm{N}(\{S \subseteq [i] * [j] \mid [i] \cap S, [j] \cap S \neq \emptyset\})/\sim
\]
where two chains of subsets $S_0\subseteq \ldots \subseteq S_n$ and $S'_0\subseteq \ldots \subseteq S'_n$ are identified if there exist $i_0\in [i]$ and $j_0\in [j]$ with $\{i_0,j_0\}\subseteq S_0, S_0'$ and $S_k\cap [i_0,j_0] = S_k'\cap [i_0,j_0]$ for all $k$.

We wish to apply the criterion from the previous lemma, which will require a case distinction based on whether the two faces are taken in the same simplicial direction or not. Consider the first case and, say, faces in the first direction, the arguments for the second being dual. Then we need to show that for any $0 \leq l < k \leq i$ the intersection of $d_k\Cb_{i-1,j}$ and $d_l\Cb_{i-1,j}$ is no larger than $d_kd_l\Cb_{i-2,j}$. 
Denote by 
\[\pi \colon \mathrm{N}(\{S \in [i+1+j] \mid [i] \cap S, [i+1,i+1+j] \cap S\neq \emptyset \}) \longrightarrow \Cb_{i,j}\] the projection. Then one easily checks that $\pi^{-1}(d_k\Cb_{i-1,j})$ consists of all those sequences $S_0 \subseteq \dots \subseteq S_n$ for which
\[S_0 \cap [k+1,i] \neq \emptyset \quad \text{or} \quad k \notin S_n.\]
Similarly, one checks that both the preimage of the intersection and $\pi^{-1}(d_kd_l\Cb_{i-2,j})$ consist of those sequences with 
\begin{enumerate}
\item $S_0 \cap [k+1,i] \neq \emptyset$,
\item $S_0 \cap [l+1,i] \neq \emptyset$, but $k \notin S_n$, or
\item $k,l \notin S_n$.
\end{enumerate}

For the mixed case, say regarding $d_k\Cb_{i-1,j}$ and $d_l\Cb_{i,j-1}$ both the preimage of the intersection and the preimage of $d_{k,l}\Cb_{i-1,j-1}$ consist of those chains with 
\begin{enumerate}
\item $S_0 \cap [k+1,i] \neq \emptyset$ and $S_0 \cap [i+1,i+l] \neq \emptyset$,
\item $S_0 \cap [k+1,i] \neq \emptyset$ and $i+1+l \notin S_n$, 
\item $k \notin S_n$ and $S_0 \cap [i+1,i+l] \neq \emptyset$, or finally
\item $k,i+1+l \notin S_n$.
\end{enumerate}
The claim follows.
\end{proof}

As the final ingredient we need:

\begin{lemma}\label{cutcontr}
The bisimplicial sets $\Cut^n$ are weakly contractible.
\end{lemma}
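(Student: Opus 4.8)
The plan is to compare $\Cut^n$ with the visibly contractible bisimplicial set $\Delta^n\boxtimes\Delta^0$ along the map $p\colon \Cut^n\to\Delta^n\boxtimes\Delta^0$ of \ref{ex:cutjoin}(3), which in bidegree $(i,j)$ sends a map $\Delta^i*\Delta^j\to\Delta^n$ to its restriction to $\Delta^i$. I would show that for each fixed $i$ the map of simplicial sets $p_i\colon \Cut^n_{i,\bullet}\to(\Delta^n\boxtimes\Delta^0)_{i,\bullet}$ (in the remaining variable $j$) is a weak homotopy equivalence, and then invoke the standard fact that a map of bisimplicial sets which is a weak equivalence in each fixed first degree induces a weak equivalence on diagonals (see \cite{GJ}). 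Since $\mathrm{diag}(\Delta^n\boxtimes\Delta^0)=\Delta^n\times\Delta^0=\Delta^n$ is contractible, this yields that $\Cut^n$ is weakly contractible.

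The content is a description of the columns $\Cut^n_{i,\bullet}$. The target $(\Delta^n\boxtimes\Delta^0)_{i,\bullet}$ is the discrete simplicial set on the set $(\Delta^n)_i=\Hom_{\bbDelta}([i],[n])$, and since the simplicial operators in the $j$-direction act only on the join factor $\Delta^j$, the map $p_i$ is a map of simplicial sets over this discrete set; hence $\Cut^n_{i,\bullet}=\coprod_{\sigma\in(\Delta^n)_i}F_\sigma$, where $F_\sigma$ is the subsimplicial set of those maps $\Delta^i*\Delta^j\to\Delta^n$ restricting to $\sigma$ on $\Delta^i$. Such a map is uniquely determined by its restriction to $\Delta^j$, which by monotonicity factors through the face $[\sigma(i),n]\subseteq[n]$ spanned by the vertices $\geq\sigma(i)$; identifying $[\sigma(i),n]\cong[n-\sigma(i)]$ one gets an isomorphism $F_\sigma\cong\Delta^{n-\sigma(i)}$ natural in $j$ (the left join factor and the value $\sigma$ are untouched by the $j$-operators). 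In particular each $F_\sigma$ is contractible, so $p_i$ is a coproduct of weak equivalences $\Delta^{n-\sigma(i)}\to\Delta^0$ and hence itself a weak equivalence. All of this is elementary bookkeeping with order-preserving maps.

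The only genuine subtlety is the temptation to instead write down an explicit contraction of $\mathrm{diag}(\Cut^n)$: that diagonal is a subdivision-type construction applied to $\Delta^n$ and is not simplicially contractible in general, so the argument really has to pass through the bisimplicial realization lemma (equivalently, through the fact that $\mathrm{diag}$ sends objectwise weak equivalences to weak equivalences) rather than through a strong deformation retraction. I would also point out that the levelwise statement proved above is exactly what is needed downstream: since $\Cb$ is Reedy cofibrant (\ref{Qcof}) and termwise contractible (\ref{Qiscontractible}), Reedy's lemma \ref{reedy} gives that $|\cdot|_\Cb$ preserves weak equivalences, so $\Sg_n=|\Cut^n|_\Cb\to|\Delta^n\boxtimes\Delta^0|_\Cb=\Cb_{n,0}=\QL_n$ is a weak equivalence, and $\QL_n$ is contractible by \ref{Qiscontractible}; thus $\Sg_n$ is contractible. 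Consequently the dependence on the bisimplicial realization lemma can even be avoided altogether if one only wants the contractibility of $\Sg_n$ needed for the main theorem, rather than the contractibility of $\Cut^n$ as stated.
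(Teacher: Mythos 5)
Your argument is correct, but it takes a genuinely different route from the paper. You compare $\Cut^n$ with $\Delta^n\boxtimes\Delta^0$ along the restriction map from \ref{ex:cutjoin}(3), compute the columns explicitly as $\Cut^n_{i,\bullet}\cong\coprod_{\sigma\in(\Delta^n)_i}\Delta^{n-\sigma(i)}$ (this computation is right: for fixed $\sigma\colon[i]\to[n]$ the remaining datum is a monotone map $[j]\to[\sigma(i),n]$, untouched by the $j$-operators), and then invoke the levelwise-to-diagonal lemma of Goerss--Jardine. The paper instead stays entirely inside its own toolkit: it applies Reedy's lemma \ref{reedy} to the zigzag of Reedy cofibrant, termwise contractible bicosimplicial objects $J\leftarrow J\times\Delta^{\boxtimes 2}\to\Delta^{\boxtimes 2}$ with $J_{i,j}=\Delta^i*\Delta^j$, obtaining $\mathrm{diag}(\Cut^n)\simeq|\Cut^n|_J$, and then identifies $|\Cut^n|_J\cong\Delta^n\times\Delta^1$ by the same adjunction game as in \ref{ex:cutjoin}. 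Your version buys a completely explicit and elementary identification of the columns at the cost of importing the diagonal realization lemma as an external black box; the paper's version avoids any new input beyond \ref{reedy} but requires recognizing the auxiliary realization $|\Cut^n|_J$. One caveat about your closing remark: the claim that the realization lemma ``can be avoided altogether'' for the contractibility of $\Sg_n$ is optimistic. To feed your levelwise equivalence $\Cut^n\to\Delta^n\boxtimes\Delta^0$ into the statement that $|\cdot|_\Cb$ preserves weak equivalences, you must know it is a weak equivalence in the sense used in \ref{reedy}, and (as the proof sketch there indicates, via the diagonal functor $|\cdot|_{\Delta^{\boxtimes 2}}$) that notion is the diagonal one -- so passing from your levelwise statement to it is exactly the content of the realization lemma you were hoping to avoid.
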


\begin{proof}

Let $J: \bbDelta^{\times 2} \rightarrow \sSet$ be the functor given by $(i,j) \mapsto \Delta^i * \Delta^j$. From Reedy's lemma we obtain weak homotopy equivalences
\[|\Cut^n|_J \longleftarrow |\Cut^n|_{J \times \Delta^{\boxtimes 2}} \longrightarrow |\Cut^n|_{\Delta^{\boxtimes 2}} = \mathrm{diag}(\Cut^n),\]
where $\Delta^{\times 2}$ is the bicosimplicial simplicial set $(i,j)\mapsto \Delta^i\times \Delta^j$. By the same argument as in \ref{ex:cutjoin} we have $|\Cut^n|_J \cong \Delta^n \times \Delta^1$.
\end{proof}

\begin{proof}[Proof of the main result]
Since $\Hom_{\Nc(\C)}(x,y) \cong \Sing_\Sg(\C(x,y))$ by construction of $\Sg$ an application of Reedy's lemma to the map $\sigma \colon \Sg \rightarrow \Delta$ constructed at the beginning of this section will give the claim. We therefore need to check that $\Sg$ is Reedy cofibrant and degreewise weakly contractible. To this end, recall that $\Sg_n = |\Cut^n|_\Cb$ naturally in $n \in \bbDelta$, or in other words $|\cdot|_\Sg =  |\cdot|_\Cb \circ |\cdot|_{\Cut}$. But both of these preserve degreewise injections, $|\cdot|_{\Cut}$ by inspection, see \ref{ex:cutjoin}, and $|\cdot|_\Cb$ by \ref{Qcof}. Furthermore, by \ref{Qiscontractible} another application of Reedy's lemma gives weak homotopy equivalences
\[\Sg_n = |\Cut^n|_\Cb \longleftarrow |\Cut^n|_{\Cb \times \Delta^{\boxtimes 2}} \longrightarrow |\Cut^n|_{\Delta^{\boxtimes 2}} = \mathrm{diag}(\Cut^n)\]
the right hand term of which is contractible by the previous lemma.
\end{proof}

Note that \ref{Qcof} also implies that $\QL$ is Reedy cofibrant and thus 
we find that in fact all three maps in the diagram
\[\xymatrix{
 & \C(x,y)\ar[dl]_\sigma \ar[dr]^\sigma & \\
 \Hom^\mathrm{L}_{\Nc(\C)}(x,y)	\ar[rr] && \Hom_{\Nc(\C)}(x,y)}
	\]
consist of homotopy equivalences. As mentioned it is a less elementary result of Joyal that the lower horizontal map is an equivalence for any quasicategory in place of the homotopy coherent nerve, see for example \cite[Section 4.2.1]{HTT} for an exposition.

\end{document}